\documentclass[11pt]{article}
\usepackage[margin=1in]{geometry}
\usepackage{iftex}
\ifPDFTeX
  \usepackage[T1]{fontenc}
  \usepackage[utf8]{inputenc}
\fi
\usepackage{lmodern}
\usepackage{microtype}
\usepackage{mathtools,amssymb,amsthm,aliascnt}
\usepackage{booktabs,tabularx}
\usepackage{needspace}
\usepackage{xcolor}
\usepackage{xurl}
\usepackage{hyperref}
\usepackage[capitalize,nameinlink,noabbrev]{cleveref}
\definecolor{linkblue}{RGB}{25,55,100}
\hypersetup{colorlinks=true,linkcolor=linkblue,citecolor=linkblue,urlcolor=linkblue}
\numberwithin{equation}{section}
\newtheorem{theorem}{Theorem}[section]
\newaliascnt{corollary}{theorem}
\newtheorem{corollary}[corollary]{Corollary}
\aliascntresetthe{corollary}
\crefname{corollary}{Corollary}{Corollaries}
\Crefname{corollary}{Corollary}{Corollaries}
\newaliascnt{proposition}{theorem}
\newtheorem{proposition}[proposition]{Proposition}
\aliascntresetthe{proposition}
\crefname{proposition}{Proposition}{Propositions}
\Crefname{proposition}{Proposition}{Propositions}
\newaliascnt{lemma}{theorem}
\newtheorem{lemma}[lemma]{Lemma}
\aliascntresetthe{lemma}
\crefname{lemma}{Lemma}{Lemmas}
\Crefname{lemma}{Lemma}{Lemmas}
\newaliascnt{claim}{theorem}
\newtheorem{claim}[claim]{Claim}
\aliascntresetthe{claim}
\crefname{claim}{Claim}{Claims}
\Crefname{claim}{Claim}{Claims}
\title{Two-sided linear hashing and quadratic density bounds for smooth lattice coverings}
\hypersetup{pdftitle={Two-sided linear hashing and quadratic density bounds for smooth lattice coverings},pdfauthor={Ben Lund}}
\author{Ben Lund\\[0.4em]
  \small Xidian University\\
  \small Institute of Mathematics and Interdisciplinary Studies\\
  \small Xi'an, China}
\date{17 September 2026}
\begin{document}
\maketitle

\begin{abstract}
We study random linear projections of a finite-field subset for which
every fiber has cardinality close to its mean. We bound the mean fiber
size needed to ensure that all fibers satisfy a prescribed relative
discrepancy, with a prescribed failure probability. For
$S\subseteq\mathbb F_q^n$ projected to $\mathbb F_q^b$, one theorem gives
three regimes: at fixed discrepancy and failure probability, sufficient
mean fiber sizes are $O(q2^b)$ for arbitrary $q$, $O(q^2)$ when $q$ is at
least a suitable constant multiple of $b$, and $O_q(b)$ for fixed $q$.
The resulting entropy loss over fixed fields is
$h-b=\log_q h+O(1)$, where $h=\log_q|S|$ is the input entropy.
This matches the order of the binary obstruction of Alon,
Dietzfelbinger, Miltersen, Petrank, and Tardos~\cite{ADMPT}; we give a
quantitative refinement over every fixed field. Our proof
combines a quotient-and-average counting lemma with the local
balanced/unbalanced argument of Dhar and Dvir~\cite{DD} and Furstenberg
estimates of Dhar and Dvir~\cite{DD} and Kumar and Mon~\cite{KM}.

We apply these bounds in the reduction of Ordentlich, Regev, and
Weiss~\cite{ORW} to improve their $O(n^3)$ bound for smooth lattice
coverings to $O(n^2)$. For each fixed convex body
$K\subseteq\mathbb R^n$, a Haar--Siegel random lattice of covolume one
has the number of lattice points in every translate of $K$ within a
prescribed relative error of $\operatorname{vol}(K)$, with prescribed
high probability, once $\operatorname{vol}(K)\ge Cn^2$ and $n$ is
sufficiently large. The constant and dimension cutoff depend only on
the error and failure probability. Complements of higher-rank Kakeya
sets of Kopparty, Lev, Saraf, and Sudan~\cite{KLSS} show that no hashing
guarantee for arbitrary subsets can yield a smaller order in the same
reduction.
\end{abstract}

\section{Introduction}
\label{sec:introduction}

\subsection{Two-sided linear projections}
\label{sec:definitions}

Let \(q\) be a prime power, \(S\subseteq\mathbb F_q^n\) a nonempty set, and \(1\le b\le n\). Put
\[
B=q^b,\qquad
\mu=\frac{|S|}{q^b},\qquad h=\log_q|S|.
\]
For a linear map \(L:\mathbb F_q^n\to\mathbb F_q^b\), the mean of its fiber sizes on \(S\) is \(\mu\). Following Dhar and Dvir \cite{DD}, for \(\tau>0\) we call \(L\) \textbf{\(\tau\)-balanced} if
\begin{equation}
(1-\tau)\mu\le |S\cap L^{-1}(y)|\le(1+\tau)\mu
\qquad\text{for every }y\in\mathbb F_q^b,
\label{eq:I1}
\end{equation}
and \textbf{\(\tau\)-unbalanced} otherwise. If \(X\) is uniform on \(S\), this is equivalent to
\[
\|P_{LX}-U_b\|_\infty\le\tau q^{-b},
\]
where \(U_b\) is the uniform probability mass function on \(\mathbb F_q^b\). Thus we measure the error relative to the probability of an individual output. For \(0<\tau<1\), balance requires both nonempty fibers and counts close to their mean. For \(\tau\ge1\), only the upper inequality can fail.

Write \(\beta_S(b,\tau)\) for the proportion of unbalanced maps among the surjections. Given \(0<\delta<1\), we seek conditions implying
\begin{equation}
\beta_S(b,\tau)\le\delta
\label{eq:I2}
\end{equation}
for every \(S\) of the indicated size. We fix the source before sampling the random map, and a good map must balance all its fibers simultaneously. The relevant parameters are the field size \(q\), output dimension \(b\), kernel dimension \(n-b\), discrepancy \(\tau\), and failure probability \(\delta\). Since the input is uniform on \(S\), its entropy is \(h=\log_q|S|\) field symbols, and the entropy loss is
\[
h-b=\log_q\mu.
\]
A bound on \(\mu\) therefore gives a continuous entropy-loss threshold; selecting an integer output dimension can cost almost one additional symbol.

An affine \(r\)-flat is a translate of an \(r\)-dimensional linear subspace. We call that subspace its direction. Write \(\mathcal G_r(n)\) for the set of these directions. For \(\sigma>0\), we also call an affine \(r\)-flat \(R\) \(\sigma\)-balanced when
\[
\bigl||S\cap R|-|S|q^{r-n}\bigr|\le\sigma |S|q^{r-n}.
\]
This definition uses the ambient mean, even when we sample \(R\) inside a fixed larger flat. Following \cite[Definition 3.2]{DD}, we call a direction \(U\) \textbf{\(\sigma\)-shift-balanced} if every affine translate of \(U\) is \(\sigma\)-balanced. Thus a surjective linear map is \(\sigma\)-balanced exactly when its kernel is \(\sigma\)-shift-balanced. In the proof we pass from fibers of dimension \(n-b\) to flats of dimension \(n-b-a\). We call their dimension difference \(a\) the \textbf{descent dimension}.

\subsection{Kakeya, Furstenberg, and linear hashing}
\label{sec:background}

The finite-field Kakeya problem asks how small a subset of \(\mathbb F_q^d\) can be if it contains a complete affine line in every direction. Dvir \cite{Dvi} proved the lower bound \(c_dq^d\) using polynomials. Ellenberg, Oberlin, and Tao \cite{EOT} established the corresponding maximal estimate and introduced the higher-dimensional plane-maximal formulation in Section 4.12 and Conjecture 4.13. Requiring a complete affine \(r\)-flat in every direction gives the higher-rank Kakeya problem.

The Furstenberg problem relaxes completeness: for each direction, some affine \(r\)-flat need only meet the set in at least \(T\) points. We call such a flat \(T\)-rich, and call a direction \(T\)-rich if it admits such a translate. Ellenberg and Erman \cite{EE} developed the finite-field \(r\)-plane Furstenberg-set formulation and proved bounds of order \(T^{d/r}\), with dimension-dependent constants. Dhar, Dvir, and Lund \cite{DDL} gave elementary proofs and stronger bounds in other parameter ranges. In particular, for \(2\le r<d\), their Theorem 2 gives a lower bound arbitrarily close to \(Tq^{d-r}\) provided that \(T\) is sufficiently large.

Dhar and Dvir \cite{DD} connected these questions to two-sided linear hashing. In their argument, an unbalanced fiber yields many smaller unbalanced flats, and they apply a Furstenberg bound to this family. We follow their local balanced/unbalanced setup. One of our key contributions is to incorporate a quotient-and-average counting lemma that generalizes \cite[Lemma 21]{DDL}: for each direction of the smaller flats, we apply a partial-direction Furstenberg estimate in the quotient by that direction, and then average over these quotients. This counting step preserves the linear dependence on the fraction of rich directions. We combine it with the established geometric bounds \cite[Theorem 6.14]{DD} and \cite[Corollary 2.17]{KM} to obtain our projection theorems.

Linear hashing also has a history independent of this geometric approach. Alon, Dietzfelbinger, Miltersen, Petrank, and Tardos \cite{ADMPT} proved covering and maximum-load bounds, together with sources for which every linear map omits an output. Jaber, Kumar, and Zuckerman \cite{JKZ} obtained optimal-order binary maximum-load bounds and coarse two-sided balance in the dense regime. Kumar and Mon \cite{KM} established optimal-order expected maximum load over every fixed field. Pathegama and Barg \cite{PB} give average-norm guarantees for general sources, which do not directly imply pointwise relative balance. Doron, Leonov, Mosheiff, Navas, Resch, and Ribeiro \cite{DLMNRR} study relative discrepancy for translates of fixed nonnegative functions and for larger families under Fourier assumptions. We compare their indicator-set specialization below.

\subsection{Projection regimes and their predecessors}
\label{sec:corollaries}

We prove a master theorem, stated in \cref{sec:master}, whose bounds depend on the choice of descent dimension \(a\). Here we present four corollaries: three specializations for useful parameter ranges and a general mean bound that interpolates between field sizes. The corollaries use the notation of \cref{sec:definitions}, and their proofs appear in \cref{sec:corollary-proofs}.

\begin{corollary}[Improved entropy loss over very large fields]
\label{cor:very-large-fields}
Let \(q\) be a prime power, \(S\subseteq\mathbb F_q^n\) nonempty, \(1\le b\le n\), \(0<\tau\le1\), and \(0<\delta<1\). If
\begin{equation}
\mu\ge\frac{32q\,2^b}{\tau^2\delta},
\label{eq:H5}
\end{equation}
then \(\beta_S(b,\tau)<\delta\). The same failure bound holds for a uniform random \(b\times n\) matrix, not just surjections.
\end{corollary}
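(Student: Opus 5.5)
The plan is to derive \cref{cor:very-large-fields} from the master theorem of \cref{sec:master}. I would choose the smallest nontrivial descent dimension and feed in the Furstenberg estimate \cite[Theorem 6.14]{DD}. The heuristic behind the factor \(q\,2^b\) is as follows. A direct Chebyshev bound on each fiber, followed by a union bound over all \(q^b\) outputs, would only give \(\mu\gtrsim q^b/(\tau^2\delta)\). The point of the descent is to replace that union bound by a Furstenberg/Kakeya-type count. That count loses only the polynomial-method constant, which is of order \(2^{-d}\) in dimension \(d\), rather than a factor \(q^b\).

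\emph{Step 1: local probabilistic input.} Fix a single affine flat \(R\) and a random affine subflat of \(R\) obtained by descending \(a\) dimensions (I would take \(a=1\), so a hyperplane inside the fiber). Its intersection count with \(S\) is a sum of indicators that are pairwise independent, except for collinear pairs. There are at most \(q\) such collinear dependencies per point, which accounts for the factor \(q\) in \eqref{eq:H5}. The variance is therefore at most about \(q\) times the mean. Chebyshev then shows that the fraction of \((\tau/2)\)-unbalanced subflats is at most about \(4q/(\tau^2\mu)\). This is the Dhar--Dvir balanced/unbalanced dichotomy: if a fiber \(L^{-1}(y)\) is \(\tau\)-unbalanced, then averaging shows that a substantial fraction of its subflats are \(\tau/2\)-unbalanced in the ambient sense, and hence are \(T\)-rich or \(T\)-poor for an appropriate threshold \(T\).

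\emph{Step 2: counting via the quotient-and-average lemma.} Suppose a \(\delta\)-fraction of kernels are unbalanced. Then a \(\delta\)-fraction of directions in \(\mathcal G_{n-b}(n)\) carry an unbalanced translate, and by Step 1 a comparable fraction of directions of the smaller flats are rich. I would apply the quotient-and-average counting lemma, working in the quotient by each small-flat direction. There the partial-direction Furstenberg bound \cite[Theorem 6.14]{DD} applies in an ambient space of dimension about \(b\), with constant \(2^{-b}\). Averaging over the quotients then yields a lower bound on the number of rich (unbalanced) subflats. This bound is linear in \(\delta\) and carries the factor \(2^{-b}\). Comparing it with the Chebyshev upper bound from Step 1 gives a contradiction unless \(\delta\le 32q\,2^b/(\tau^2\mu)\). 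The constant \(32\) should arise from the two halvings of \(\tau\) (one for the descent and one for the rich/poor split) together with the factor \(2\) in the lower tail. If the master theorem already packages Steps 1--2, the remaining work is to substitute \(a=1\) and the Dhar--Dvir bound and check that the constants come out as stated.

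\emph{Step 3: non-surjective matrices.} For a uniform random \(b\times n\) matrix, I would condition on the row space. If the rank is below \(b\), then the image is a proper subspace of \(\mathbb F_q^b\) and some fiber is empty, so the map is unbalanced whenever \(\tau<1\). When \(\tau=1\), only the upper bound can fail. I would therefore need either the fact that the relevant counting bound holds verbatim for every random matrix, since the Chebyshev step only uses pairwise independence of \(Lx\), or a direct bound showing that the rank-deficiency probability, at most \(q^{b-n}/(q-1)\), is absorbed by the slack between \(\le\) and \(<\). I expect this bookkeeping, together with getting exactly the constant \(32\) and the strict inequality out of the master theorem, to be the main obstacle. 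The conceptual difficulty, namely obtaining \(2^b\) instead of \(q^b\), is handled by the quotient-and-average lemma.
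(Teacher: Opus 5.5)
Your route is the paper's: apply \cref{thm:master} with descent dimension \(a=1\) and the DD branch. But the proposal stops where the corollary's proof begins, and the Step 1 mechanism you describe would not give the stated bound.

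\textbf{The source of the factor \(q\).} It is not variance inflation from ``collinear pairs.'' For a uniform affine flat, any two distinct points are nonpositively correlated, since \(\Pr[x,y\in R]\le q^{2(j-m)}\). So the variance is at most the mean, as in \cref{clm:global-unbalanced}. The \(q\) appears because the descended \((n-b-1)\)-flats have mean \(E=|S|/q^{b+1}=\mu/q\), not \(\mu\). Chebyshev at \(\sigma=\tau/2\) then gives \(4/(\tau^2E)=4q/(\tau^2\mu)\). Your description (mean \(\mu/q\), variance \(q\) times that) would give \(4q^2/(\tau^2\mu)\) and a threshold of order \(q^2 2^b\).

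\textbf{Density close to one is essential.} It is not enough that ``a substantial fraction'' of the subflats of an unbalanced fiber are unbalanced. At \(a=1\) the DD bound is \(\theta(\gamma/2)^{b+1}\), so a density \(\gamma\) bounded away from \(1\) costs a factor \(\gamma^{-(b+1)}\). That is exponential in \(b\) and destroys the \(2^b\). What \cref{clm:unbalanced-parent} plus Markov actually give is \(\gamma=1-8(1+\tau)/(\tau^2E)\). Essentially the whole content of this corollary is showing that \(\gamma^{-(b+1)}\) stays bounded.

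\textbf{The missing step.} First, \(E\ge 32\cdot 2^b/(\tau^2\delta)>1\) together with \(|S|\le q^n\) gives \(n>b+1\), as the master theorem requires. Put \(d=b+1\) and use \(d\le 2^b\). Then \(d(1-\gamma)=8(1+\tau)d/(\tau^2E)\le\delta/2\), so in particular \(\gamma>0\). Bernoulli's inequality with exponent \(d/2\ge1\) gives \(\gamma^{d/2}\ge 1-\delta/4\). The DD branch with leading constant \(9\) then yields
\[
\frac{9\cdot 2^{b+1}}{\tau^2E\,\gamma^{b+1}}\le\frac{9\delta}{16(1-\delta/4)^2}<\delta .
\]

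\textbf{Where the constant \(32\) comes from.} It is not two halvings of \(\tau\). We have \(32=18\cdot\frac{16}{9}\). The factor \(18\) is the matrix constant \(9\) times the extra \(2\) in \((1+q^{0})^{b+1}=2\cdot 2^b\). The factor \(16/9\) is exactly the slack that absorbs \(\gamma^{-(b+1)}\le(1-\delta/4)^{-2}\).

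\textbf{Step 3 is not needed.} The constant \(9\) in \cref{thm:master} already absorbs the rank-deficiency term of \cref{prop:rank-transfer}, which is where your option (b) belongs. Option (a) does not work as stated. A rank-deficient matrix does not have a uniform \((n-b)\)-dimensional kernel, and the master argument is geometric rather than a pairwise-independence bound for \(Lx\).
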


For fixed \(\tau,\delta\), the sufficient mean has order \(q2^b\), and the corresponding continuous loss is
\[
1+b\log_q2+\log_q\frac{32}{\tau^2\delta}.
\]
It approaches one symbol when \(b=o(\log q)\). The \(a=1\) descent bound improves the order of the \(q^2\) mean bound below when \(2^b=o(q)\). When \(|S|=q^j\), it permits \(b=j-2\) if \(j\ge3\) and \(q\ge32\,2^{j-2}/(\tau^2\delta)\). We do not claim that the dependence on output dimension is optimal.

\begin{corollary}[Quadratic mean at a linear field cutoff]
\label{cor:linear-field-cutoff}
Let \(q\) be a prime power, \(S\subseteq\mathbb F_q^n\) nonempty, \(1\le b\le n\), \(0<\tau\le1\), and \(0<\delta<1\). If
\begin{equation}
q\ge16(b+2),\qquad
\mu\ge\frac{192q^2}{\tau^2\delta},
\label{eq:H6a}
\end{equation}
then \(\beta_S(b,\tau)<\delta\). The same failure bound holds for a uniform random \(b\times n\) matrix, not just surjections.
\end{corollary}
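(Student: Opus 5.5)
We derive \cref{cor:linear-field-cutoff} from the master theorem of \cref{sec:master} by choosing the descent dimension \(a=2\), paired with the partial-direction Furstenberg estimate of Kumar and Mon \cite[Corollary 2.17]{KM}. That estimate is effective once \(q\) is linear in the ambient dimension of the quotient, which is the source of the cutoff \(q\ge16(b+2)\). The structure mirrors the proof of \cref{cor:very-large-fields}, which uses \(a=1\).

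\emph{Local step.} Suppose a surjection \(L\) is \(\tau\)-unbalanced, and let \(F\) be a fiber with \(\bigl||S\cap F|-\mu\bigr|>\tau\mu\). Partition \(F\) into translates of a uniformly random \((n-b-2)\)-dimensional subspace \(W\subseteq\ker L\). This partition has \(q^2\) parts. A second-moment (Chebyshev) computation over \(W\) should show that a constant fraction of these partitions contain a part whose deviation from the ambient mean \(\mu q^{-2}\) is at least \((\tau/2)\mu q^{-2}\). The variance of a part count is at most roughly \(\mu q^{-2}\). We therefore need \((\tau\mu q^{-2})^2\gtrsim \mu q^{-2}/\delta\), that is, \(\mu\gtrsim q^2/(\tau^2\delta)\). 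This is exactly how the factor \(192q^2/(\tau^2\delta)\) should arise, with the constant \(192\) absorbing the factor \(4\) from \(\tau/2\) and the Markov and union losses.

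\emph{Global step.} If \(\beta_S(b,\tau)\ge\delta\), then many codimension-\((b+2)\) directions are \((\tau/2)\)-rich, meaning they have an unbalanced translate. We pass to the quotient by each such direction \(W\), where \(S\) becomes a weighted set in \(\mathbb F_q^{b+2}\) and the rich flats become points. There we apply the quotient-and-average counting lemma. The rich directions through the quotient yield a partial Furstenberg configuration of lines or planes, so the estimate \cite[Corollary 2.17]{KM} in \(\mathbb F_q^{b+2}\) forces the unbalanced mass to be large. For \(q\ge16(b+2)\), the polynomial-method loss factor \((1-(b+2)/q)\)-type term stays bounded below by a constant. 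Averaging over \(W\), the counting lemma keeps the dependence on the fraction of rich directions linear. The total excess mass in unbalanced flats then exceeds what Chebyshev allows once \(\mu\ge192q^2/(\tau^2\delta)\), which is a contradiction. Hence \(\beta_S(b,\tau)<\delta\).

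\emph{Uniform matrices.} To pass from surjections to uniform \(b\times n\) matrices, note that a non-surjective matrix has a fiber that is empty or too large. Its probability, at most \(q^{b-n}/(q-1)\), should be absorbed by the slack between the strict inequality and the constants, as in \cref{cor:very-large-fields}. Alternatively, the master theorem may already be stated for uniform matrices.

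The main obstacle is verifying that the hypotheses of \cite[Corollary 2.17]{KM} in dimension \(b+2\) hold with explicit constants at \(q\ge16(b+2)\), so that the resulting constant is at most \(192\). I would first check the master theorem's explicit formula at \(a=2\), substitute the Kumar–Mon bound, and simplify using \(q/(b+2)\ge16\).
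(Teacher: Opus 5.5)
Your route is the paper's. It proves this corollary by applying \cref{cor:mean-bounds} with \(a=2\), which is the KM branch of \cref{thm:master} with leading constant \(9\); that version already covers uniform random matrices, so your second option for that part is the right one. However, the proposal stops where the proof begins. All of the content lies in checking the hypotheses of \cref{thm:master} and the constant, and you defer exactly that. The check is short:
\begin{itemize}
\item Put \(E=\mu/q^2\ge192/(\tau^2\delta)>1\). Since \(|S|\le q^n\), this gives \(b+2<n\), a hypothesis of \cref{thm:master} that you did not mention.
\item Next, \(1-\gamma=8(1+\tau)/(\tau^2E)\le(1+\tau)\delta/24<1/12\), so \(\gamma\ge3/4\).
\item With \(q\ge16(b+2)\) this gives \(\gamma q\ge12(b+2)\ge8\), so the KM branch is admissible, and \(8(b+2)/(\gamma q)\le2/3\le1\).
\item Hence the failure probability is at most \(\frac{9}{\tau^2E}\cdot\frac4\gamma\cdot e\le\frac{48e}{\tau^2E}\le\frac e4\,\delta<\delta\).
\end{itemize}

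You should drop or correct your informal re-derivation of the local step, because as written it would not feed the Furstenberg estimate.
\begin{itemize}
\item It is not enough that a constant fraction of partitions of the unbalanced fiber contain \emph{one} unbalanced part. \Cref{clm:quotient-average} needs a \(\gamma\)-fraction of the \(q^2\) parallel parts to be unbalanced, with \(\gamma q\ge8\). With a single part, \(\gamma=q^{-2}\): the KM branch is then inadmissible and the DD branch becomes exponentially small.
\item The paper gets \(\gamma=1-8(1+\tau)/(\tau^2E)\), close to one, for at least half of the \(W\), from \cref{clm:unbalanced-parent} and Markov's inequality.
\item The factor \(\delta^{-1}\) does not enter through that local Chebyshev step, which needs only \(E\gg(1+\tau)/\tau^2\). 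It comes from comparing the global bound \(p_{\mathcal B}\le4/(\tau^2E)\) of \cref{clm:global-unbalanced} with the averaged Furstenberg lower bound \(p_{\mathcal B}\ge(\beta_S(b,\tau)/2)\,(\gamma/4)\,e^{-8(b+2)/(\gamma q)}\).
\item Similarly, the points in the quotient \(\mathbb F_q^{b+2}\) are the unbalanced \((n-b-2)\)-flats parallel to \(W\), not a weighted copy of \(S\). The rich configuration consists of \(\gamma\)-dense planes.
\end{itemize}
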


\textbf{Comparison with Dhar and Dvir.} At fixed discrepancy and failure probability, \cref{cor:linear-field-cutoff} gives a continuous entropy-loss threshold \(2+o(1)\) already for field sizes linear in \(b\). By comparison, \cite[Theorem 2.1]{DD} gives three to four symbols of loss for fields linear in \(n\). Their Theorem 3.4 approaches two symbols, but achieving a threshold \(2+\xi\), for \(0<\xi\le1\), requires \(q^\xi\ge C_{\xi,\tau,\delta}n\). \cref{cor:very-large-fields} further reduces the continuous loss towards one symbol over sufficiently large fields.

The dependence on failure probability also improves: \cref{cor:linear-field-cutoff} requires a mean proportional to \(\delta^{-1}\), adding just \(\log_q(1/\delta)\) to the continuous entropy-loss threshold. In \cite[Theorems 2.1 and 3.4]{DD}, the respective hypotheses on \(q\) and \(q^\xi\) contain a factor \(\delta^{-2}\).

\begin{corollary}[Fixed fields and logarithmic entropy loss]
\label{cor:fixed-fields}
Let \(q\) be a prime power, let \(S\subseteq\mathbb F_q^n\) have entropy \(h=\log_q|S|\ge1\), and let \(0<\tau\le1\) and \(0<\delta<1\). Set
\begin{equation}
b=\left\lfloor
h-1-\left\lceil\log_q(16h)\right\rceil
-\log_q\frac{192}{\tau^2\delta}
\right\rfloor.
\label{eq:H7a}
\end{equation}
If \(b\ge1\), a uniform random linear map \(L:\mathbb F_q^n\to\mathbb F_q^b\) is \(\tau\)-balanced with probability greater than \(1-\delta\). The same conclusion holds for a uniform random surjection.
\end{corollary}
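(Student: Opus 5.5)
The plan is to derive \cref{cor:fixed-fields} from the machinery behind \cref{cor:linear-field-cutoff}. The sufficient mean there is \(192q^2/(\tau^2\delta)\), but it needs a field cutoff \(q\ge16(b+2)\) that fails over fixed fields. We remove that cutoff by passing to an extension field. Set
\[
m=\left\lceil\log_q(16h)\right\rceil,\qquad Q=q^m\ge16h.
\]

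\textbf{Step 1 (transfer the source).} Since \(\mathbb F_q^n\) embeds \(\mathbb F_q\)-linearly in \(\mathbb F_Q^{n}\), we regard \(S\) as a subset of an \(\mathbb F_Q\)-space. That is not quite right, however: a random \(\mathbb F_Q\)-linear map restricted to \(\mathbb F_q^n\) is not a uniform \(\mathbb F_q\)-linear map. So the fixed-field statement should instead come from the master theorem of \cref{sec:master}, applied with a descent dimension \(a\) chosen so that the loss is \(1+m+\log_q(192/(\tau^2\delta))\).

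\textbf{Step 2 (interpret the formula).} The formula \eqref{eq:H7a} reads as follows. We need \(\mu=q^{h-b}\ge q^{1+m}\cdot 192/(\tau^2\delta)\), that is, \(\mu\ge 192\,q\,Q/(\tau^2\delta)\). This is the quadratic mean bound with one factor \(q^2\) replaced by \(qQ\). Here \(Q\ge16h\ge16(b+2)\) plays the role of the field cutoff, since \(b\le h-2\).

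So I would invoke the master theorem with descent dimension \(a=m+1\) in place of the \(a=2\) used for \cref{cor:linear-field-cutoff}. The Furstenberg input would be the fixed-field estimate \cite[Corollary 2.17]{KM}, which handles the small-field regime. The hypothesis that replaces \(q\ge16(b+2)\) should be a condition like \(q^{a-1}\ge16(b+2)\), and it is met because \(q^m\ge16h\).

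\textbf{Step 3 (verify and conclude).} We check that \(b\ge1\), that \(\mu\) meets the master threshold, and that the cutoff condition holds, using \(h\ge b+2\). This gives \(\beta_S(b,\tau)<\delta\) for surjections. To pass to a uniform random matrix, the final step conditions on surjectivity. Alternatively, as in \cref{cor:linear-field-cutoff}, the master bound may already be stated for uniform matrices, and non-surjective maps can be handled by the same counting.

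\textbf{Main obstacle.} The hard step is matching the exact constants \(16\) and \(192\) and the exponent \(1+m\) to the master theorem's hypotheses at the right \(a\). I would first try \(a=m+1\) and check whether the master theorem's mean bound takes the form \(C q^{a}/(\tau^2\delta)\) under a cutoff \(q^{a-1}\ge16(b+2)\).
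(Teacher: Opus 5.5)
Your final plan is the paper's proof. You drop the extension-field idea and run the KM branch of \cref{thm:master} with \(a=1+\lceil\log_q(16h)\rceil\), reading \(\mu\ge192q^a/(\tau^2\delta)\) off \eqref{eq:H7a}. The paper does exactly this by invoking \cref{cor:mean-bounds}.

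The one fix is the cutoff. It must be \(q^{a-1}\ge16(b+a)\), not \(16(b+2)\), because the KM exponent is \(8(b+a)/(\gamma q^{a-1})\) with \(d=b+a\), and your inequality \(b\le h-2\) does not give this. Instead, \eqref{eq:H7a} yields \(b+a\le h-\log_q(192/(\tau^2\delta))<h\le n\). That gives both \(q^{a-1}\ge16h\ge16(b+a)\) and the master theorem's requirement \(b+a<n\).

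Then \(\gamma\ge3/4\), and the constant-\(9\) matrix form of \cref{thm:master} bounds the failure probability by \(48e/(\tau^2E)\le e\delta/4<\delta\). This covers surjections and uniform matrices alike. Conditioning on surjectivity alone would not absorb the rank-deficiency probability.
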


The next corollary gives a sufficient mean fiber size for a prescribed output dimension, with explicit dependence on the field size. It connects the large-field bound of \cref{cor:linear-field-cutoff} with the fixed-field regime of \cref{cor:fixed-fields} and covers intermediate field sizes.

\begin{corollary}[Mean bounds across field sizes]
\label{cor:mean-bounds}
Let \(q\) be a prime power, let \(S\subseteq\mathbb F_q^n\) be nonempty, and let \(1\le b\le n\), \(0<\tau\le1\), and \(0<\delta<1\). If there is an integer \(a\ge1\) such that
\begin{equation}
q^{a-1}\ge16(b+a),\qquad
\mu\ge\frac{192q^a}{\tau^2\delta},
\label{eq:H6}
\end{equation}
then \(\beta_S(b,\tau)<\delta\). The same failure bound holds for a uniform random \(b\times n\) matrix, not just surjections. In particular, for each fixed \(q\), there is a constant \(C_q\), independent of \(n,b,\tau,\delta\), such that the same conclusions hold whenever
\begin{equation}
\mu\ge\frac{C_q b}{\tau^2\delta}.
\label{eq:H7}
\end{equation}
\end{corollary}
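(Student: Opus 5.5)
We deduce \cref{cor:mean-bounds} from the master theorem of \cref{sec:master}, taking descent dimension equal to the given integer \(a\). The first step is to recall the master theorem's shape. It fixes the descent dimension \(a\) and passes from \((n-b)\)-dimensional fibers to \((n-b-a)\)-flats. It then bounds \(\beta_S(b,\tau)\) by an expression of the form
\[
\frac{c\,q^a}{\tau^2\mu}\cdot\frac{1}{\epsilon_a},
\]
where \(\epsilon_a\) is the density lower bound supplied by the partial-direction Furstenberg estimate. In the quotient-and-average lemma, for \(a\ge2\) this estimate is \cite[Corollary 2.17]{KM} applied in the \((b+a)\)-dimensional quotient, and for \(a=1\) it is \cite[Theorem 6.14]{DD}. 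The hypothesis \(q^{a-1}\ge16(b+a)\) is exactly what keeps this loss constant: a Kumar--Mon type bound gives rich-set density at least roughly \(1-(b+a)/q^{a-1}\) up to constants, and under our hypothesis this is at least \(15/16\) or a comparable absolute constant. Substituting, the master bound becomes \(\beta_S\le C_0q^a/(\tau^2\mu)\) with an absolute \(C_0\). The displayed constant \(192\) is chosen so that \(\mu\ge192q^a/(\tau^2\delta)\) forces \(\beta_S<\delta\). The key bookkeeping is to check that the master theorem's numeric constants under this hypothesis give \(C_0<192\), with strict inequality. I expect this to be the main obstacle, since it requires matching the exact constants of \cite{KM} and of the counting lemma.

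For the statement about uniform random \(b\times n\) matrices, the plan is a conditioning argument. A uniform matrix is non-surjective with probability at most \(q^{b-n}/(q-1)\). Conditioned on surjectivity it is uniform over surjections. A non-surjective map is automatically unbalanced when \(\tau\le1\), since some fiber is empty. So I would either cite the master theorem's matrix version, if it is stated there, or absorb this small probability into the slack between \(C_0\) and \(192\). For the latter, use \(\mu\le q^{n-b}\), so that \(q^{b-n}\le 1/\mu\le\tau^2\delta/(192q^a)\), which is far smaller than the remaining slack.

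For the fixed-\(q\) clause, given \(b\) choose the least integer \(a\ge1\) with \(q^{a-1}\ge16(b+a)\). The left side grows exponentially in \(a\) and the right side linearly, so \(a=\log_q b+O_q(1)\). Minimality gives \(q^{a-2}<16(b+a-1)\), hence \(q^a<16q^2(b+a-1)\le C'_q b\), using \(a\le C''_q b\) (indeed \(a=O_q(\log b+1)\)). Setting \(C_q=192\,C'_q\), any \(\mu\ge C_qb/(\tau^2\delta)\) satisfies \(\mu\ge192q^a/(\tau^2\delta)\), and the first part applies. The constant depends only on \(q\), as required.
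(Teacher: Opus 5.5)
Your route is the paper's: apply \cref{thm:master} with the given \(a\) and use its KM branch, pass to matrices by rank transfer, and for fixed \(q\) take the least admissible \(a\). The gap is that the step you defer as the ``main obstacle'' is the entire proof, and your sketch of it misreads the master bound. The KM branch is \(\frac{8}{\tau^2E}\cdot\frac{4}{\gamma}\exp\bigl(\frac{8(b+a)}{\gamma q^{a-1}}\bigr)\), with \(E=\mu/q^a\) and \(\gamma=1-8(1+\tau)/(\tau^2E)\). It is available only when \(b+a<n\), \(\gamma>0\), and \(\gamma q^{a-1}\ge8\). So the loss relative to \(8/(\tau^2E)\) is \(4/\gamma\) times an exponential, not a factor close to \(1-(b+a)/q^{a-1}\). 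The field hypothesis alone does not make this loss an absolute constant. You must also use the mean hypothesis to bound \(\gamma\) from below and to get \(b+a<n\), and your proposal never does this. Nothing needs to be re-matched against \cite{KM} or \cref{clm:quotient-average}, because \cref{thm:master} already states explicit constants.

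The check is short. The hypothesis gives \(E\ge192/(\tau^2\delta)>1\), so \(q^{b+a}<|S|\le q^n\), which gives \(b+a<n\). It also gives \(1-\gamma\le(1+\tau)\delta/24<1/12\), so \(\gamma\ge3/4\). Then \(\gamma q^{a-1}\ge12(b+a)\ge8\) and \(8(b+a)/(\gamma q^{a-1})\le1\), and the KM branch with leading constant \(9\) gives
\[
\Pr[L\text{ is }\tau\text{-unbalanced}]
\le\frac{9}{\tau^2E}\cdot\frac{4}{\gamma}\,e
\le\frac{48e}{\tau^2E}
\le\frac{e}{4}\,\delta<\delta .
\]
Your alternative for matrices also works: use the surjection constant \(8\) and add \(r_{n,b,q}<1/((q-1)\mu)\). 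However, your claim that every non-surjective map is unbalanced for \(\tau\le1\) is false at \(\tau=1\), where empty fibers are allowed. You only need the union bound. Your fixed-\(q\) argument via minimality of \(a\) matches the paper's and is slightly more explicit. Note that \(a=1\) never satisfies \(q^{a-1}\ge16(b+a)\), so the DD branch plays no role here.
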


The choice \(a=2\) recovers \cref{cor:linear-field-cutoff}; larger values cover smaller fields, with the fixed-field estimate giving the same logarithmic order of entropy loss as \cref{cor:fixed-fields}.

\textbf{Comparison with prior binary hashing bounds.} At fixed discrepancy and failure probability, \cite[Theorems 2.2 and 2.4]{DD} give losses \(4\log_2n+O(1)\) and \(4\log_2h+O(1)\), respectively. \cref{cor:fixed-fields} reduces the latter leading coefficient from four to one. The lower bound of Alon et al. \cite[Proposition 2.2]{ADMPT} shows that one is the optimal leading coefficient in the worst case for fixed \(0<\tau<1\) and \(0<\delta<1\); see \cref{sec:lower-bounds}. Dhar and Dvir obtain their binary bounds by reducing to a hashing bound over a large extension field, whereas we apply our master theorem directly over \(\mathbb F_2\).

Theorem 4 of \cite{JKZ} gives, for each \(0<\delta<1/2\), constants \(c_1,c_2\) such that
\[
\mu\ge c_1^{-1}\log_2B
\quad\Longrightarrow\quad
\Pr[\forall y,\ c_1\mu\le |S\cap L^{-1}(y)|\le c_2\mu]\ge1-\delta,
\]
where \(c_1=\Omega(\delta^{74})\), \(c_2=O(\delta^{-1/2})\). The fixed-field mean bound \labelcref{eq:H7} in \cref{cor:mean-bounds} subsumes this theorem as stated. Taking \(\tau=1/2\) gives factors \(1/2,3/2\) at mean \(4C_{\mathrm{bin}}\delta^{-1}\log_2B\), where \(C_{\mathrm{bin}}\ge1\) denotes our binary constant. Set \(c_1=\delta/(4C_{\mathrm{bin}})\), \(c_2=3/2\) to recover their formulation.

\begin{center}
\small
\begin{tabularx}{\textwidth}{@{}>{\raggedright\arraybackslash}X >{\centering\arraybackslash}p{0.24\textwidth} >{\raggedright\arraybackslash}p{0.26\textwidth}@{}}
\toprule
\textbf{Binary dense-regime guarantee} & \textbf{Sufficient mean} & \textbf{Lower and upper factors} \\
\midrule
\cite[Theorem 4]{JKZ}, displayed dependence & \(O(\delta^{-74}\log B)\) & \(\Omega(\delta^{74})\), \(O(\delta^{-1/2})\) \\
Bound \labelcref{eq:H7}, \(\tau=1/2\) & \(O(\delta^{-1}\log B)\) & \(1/2,\ 3/2\) \\
Bound \labelcref{eq:H7}, prescribed \(\tau\) & \(O(\tau^{-2}\delta^{-1}\log B)\) & \(1-\tau,\ 1+\tau\) \\
\bottomrule
\end{tabularx}
\end{center}

We therefore recover Theorem 4 of \cite{JKZ} with improved dependence on \(\delta\), and also allow any prescribed relative discrepancy \(\tau\). Their sparse-regime maximum-load theorems remain separate.

\textbf{Comparison with the code discrepancy bound of Doron et al.} For \(f=q^n\mathbf1_S/|S|\), \cite[Theorem 2.4]{DLMNRR} controls every count \(|S\cap(z+C)|\) relative to \(|C|\,|S|/q^n\), where \(C=\operatorname{im}G\) and \(G\) is a uniform \(n\times(n-b)\) matrix. Conditional on full rank, \(C\) has exactly the kernel distribution of a uniform surjection, so this is our all-fiber balance problem. Our results subsume this indicator-function specialization throughout its nonvacuous parameter range, reduce the required entropy loss, and improve the discrepancy and failure bounds. \cref{sec:quantitative-comparisons} gives the explicit bounds and verifies the subsumption for every field size.

\cref{cor:fixed-fields} substantially improves the accessible entropy-loss regime of \cite[Theorem 2.4]{DLMNRR}. Write \(\lambda=h-b>0\). Although their hypothesis is \(\lambda\ge240\log_qn\), their failure bound contains \(q^{2n-\lambda^2/1440}\), so it is nontrivial only when \(\lambda>\sqrt{2880n}\). For fixed \(q,\tau,\delta\), \cref{cor:fixed-fields} instead requires only \(\lambda=\log_qh+O_{q,\tau,\delta}(1)\).

We also improve the trade-off within their accessible loss regime. At the same \(\lambda\), we can reduce their relative discrepancy \(q^{-\lambda/12}\) to \(q^{-\lambda/6}\), while replacing their \(4q^{1-\lambda/3}\) failure term by \(O_q(hq^{-2\lambda/3})\) and eliminating the term \(q^{2n-\lambda^2/1440}\). In the generating-matrix model, our bound also includes an \(O(q^{-b})\) allowance for rank deficiency; this allowance disappears for a uniform code of dimension exactly \(n-b\). This comparison concerns indicator functions; we do not subsume their weighted-function or simultaneous-family theorems.

\subsection{The master theorem and the geometric inputs}
\label{sec:master}

The preceding bounds follow from one inequality, which can also be optimized between their displayed parameter choices.

\begin{theorem}[Master theorem]
\label{thm:master}
Let \(q\) be a prime power, let \(S\subseteq\mathbb F_q^n\) be nonempty, and let \(a,b\ge1\) be integers with \(b+a<n\). For \(\tau>0\), set
\[
E=\frac{|S|}{q^{b+a}},\qquad
\gamma=1-\frac{8(1+\tau)}{\tau^2E},
\]
and suppose that \(\gamma>0\). Then
\begin{equation}
\boxed{
\beta_S(b,\tau)\le
\frac{8}{\tau^2E}
\min\left\{
\left(\frac{1+q^{1-a}}{\gamma}\right)^{b+a},
\quad
\frac4\gamma
\exp\!\left(\frac{8(b+a)}{\gamma q^{a-1}}\right)
\right\}.
}
\label{eq:H1}
\end{equation}
We include the second expression in the minimum only when \(\gamma q^{a-1}\ge8\); otherwise we omit it.

For \(0<\tau\le1\), the same bound holds for the probability that a uniform random \(b\times n\) matrix over \(\mathbb F_q\) is \(\tau\)-unbalanced if we replace the leading constant \(8\) by \(9\).
\end{theorem}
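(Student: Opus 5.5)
We follow the Dhar--Dvir local balanced/unbalanced scheme. We use the descent dimension \(a\) to pass from fibers of dimension \(n-b\) to affine flats of dimension \(r=n-b-a\). Each such flat has ambient mean \(E=|S|q^{r-n}\).

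\textbf{Step 1: an unbalanced fiber forces many unbalanced subflats.} Fix a surjection \(L\) that is \(\tau\)-unbalanced, and a bad fiber \(F=S\cap L^{-1}(y)\), where \(L^{-1}(y)\) is an \((n-b)\)-flat. Choose a uniform \(r\)-dimensional direction \(W\) inside \(\ker L\), and partition \(L^{-1}(y)\) into its \(q^a\) translates of \(W\). The average count over these translates is \(|F|q^{-a}\), which differs from \(E\) by more than \(\tau E\). A second-moment (Chebyshev) estimate over the random \(W\) and a random translate uses pairwise near-independence of points in a random subspace. It shows that, with probability at least roughly \(\gamma\), a constant fraction of the translates are \((\tau/2)\)-unbalanced in the ambient sense. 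Moreover the rich ones carry at least \(T\approx(1+\tau/2)E\) points of \(S\) in the upper case; in the lower case we pass to the complement. The loss term \(8(1+\tau)/(\tau^2E)\) in \(\gamma\) is exactly this variance bound.

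\textbf{Step 2: bound the fraction of rich directions.} Conversely, again by Chebyshev, a uniformly random affine \(r\)-flat is \((\tau/2)\)-unbalanced with probability at most \(\frac{4(1+\tau)}{\tau^2E}\cdot(\text{small})\). Hence the set of \(r\)-directions admitting rich (unbalanced) translates, weighted appropriately, has small density. To turn Step 1 into a bound on \(\beta_S\), we must show the contrapositive. If a \(\beta\)-fraction of kernels \(U\in\mathcal G_{n-b}(n)\) are unbalanced, then a \(\gtrsim\beta\gamma\)-fraction of \(r\)-directions \(W\) are rich. This in turn forces many rich flats, and hence \(S\) to carry many unbalanced flats, contradicting the second-moment count.

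We make this transfer with the quotient-and-average counting lemma. For each \(r\)-direction \(W\), we project along \(W\) to \(\mathbb F_q^{n-r}=\mathbb F_q^{b+a}\). The rich translates of \(W\) become rich points, and the unbalanced \((n-b)\)-kernels containing \(W\) become \(a\)-dimensional directions in the quotient, each containing a rich \(a\)-flat. We then apply a partial-direction Furstenberg estimate in dimension \(d=b+a\) for \(a\)-flats, and average over \(W\); this keeps linear dependence on \(\beta\). The two Furstenberg inputs give the two terms of the minimum. \cite[Theorem 6.14]{DD} gives a lower bound on the set size of the form \(\beta\,(\gamma/(1+q^{1-a}))^{b+a}\times\)(full), which yields the first term. \cite[Corollary 2.17]{KM}, valid when \(\gamma q^{a-1}\ge8\), gives \(\beta\gamma e^{-8(b+a)/(\gamma q^{a-1})}/4\) times full, which yields the second term. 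Comparing with the Chebyshev upper bound \(8/(\tau^2E)\) on the density of unbalanced flats gives \eqref{eq:H1}.

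\textbf{Main obstacle.} The hard step is the bookkeeping in the quotient averaging. The rich-flat set in each quotient must be a genuine point set whose Furstenberg bound, averaged over \(W\), is compared with a single global second-moment count. Handling the lower-tail case (sparse fibers) symmetrically also needs care: we would treat it via \(\mathbb F_q^n\setminus S\) or by a one-sided variant of the same Chebyshev estimate.

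\textbf{Matrices.} For a uniform \(b\times n\) matrix, we condition on surjectivity, which fails with probability at most \(q^{-(n-b)}/(q-1)\lesssim q^{-a}\le 1/(\tau^2E)\cdot\) small when \(\tau\le1\). This is absorbed by raising the constant from \(8\) to \(9\).
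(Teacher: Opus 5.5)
Your architecture matches the paper's. You take $\mathcal B$ to be the $(\tau/2)$-unbalanced affine $\ell$-flats with $\ell=n-b-a$, and descend from an unbalanced translate $T_U$ of each bad kernel $U$. In each quotient $\mathbb F_q^n/W\cong\mathbb F_q^{b+a}$ you view $\mathcal B$ as a point set $X_W$, apply the DD or KM estimate there, average over $W$, and compare with a global Chebyshev count. However, Step 1 states the key trade-off backwards, and as written it does not give \eqref{eq:H1}. The Furstenberg inputs need a witnessing $a$-flat on which $X_W$ has density $\gamma$ close to $1$. The richness is $\gamma q^a$ members of $\mathcal B$, not $(1+\tau/2)E$ points of $S$. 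A ``constant fraction'' $c$ would only give a factor $c^{b+a}$ in the DD branch. Your final formula uses $\gamma$ as the density, while Step 2 also charges a factor $\gamma$ to the fraction of directions; this is inconsistent with Step 1.

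The correct step runs as follows. Chebyshev inside $T_U$ bounds the variance by the local mean $u=|S\cap T_U|q^{-a}$, so a uniform $\ell$-subflat of $T_U$ is $(\tau/2)$-balanced with probability at most $\varepsilon_0=4(1+\tau)/(\tau^2E)$. This holds in both cases $u>(1+\tau)E$ and $u<(1-\tau)E$, by monotonicity of $u/(u-c)^2$ and $u/(c-u)^2$. So the sparse case needs no separate treatment. Passing to $\mathbb F_q^n\setminus S$ would not work anyway: its relative deviation $\tau\mu/(q^{n-b}-\mu)$ is negligible for sparse $S$. Choosing $W\subset U$ uniformly and then a translate inside $T_U$ gives a uniform subflat. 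Markov at level $1/2$ then shows that for at least half of the $W\subset U$, at least $(1-2\varepsilon_0)q^a=\gamma q^a$ translates lie in $\mathcal B$. Thus the flag fraction is $\alpha\ge\beta_S(b,\tau)/2$, with density exactly $\gamma$; this is where $8(1+\tau)=2\cdot4(1+\tau)$ comes from. Uniform marginals of the flag measure give $\mathbb E_W\theta_W\ge\alpha$. The global bound is $p_{\mathcal B}\le4/(\tau^2E)$, not $8/(\tau^2E)$; the leading $8$ is this $4$ divided by $\alpha/\beta\ge1/2$. Finally, in the matrix step your inequality $q^{-a}\le1/(\tau^2E)$ is false in general, for example when $S=\mathbb F_q^n$. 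Instead use $|S|\le q^n$ to get $r_{n,b,q}<q^{b-n}/(q-1)\le1/((q-1)q^aE)\le1/(\tau^2E)$. This is at most $R/8$, where $R$ is the right side of \eqref{eq:H1}, because both expressions in the minimum are at least $1$.
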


The surjection bound allows every \(\tau>0\). If \(\tau\ge1\), negative discrepancy cannot cause a flat to be unbalanced, but the positive-discrepancy argument remains valid.

We call the first expression the \textbf{DD branch}; it uses \cite[Theorem 6.14]{DD}. We call the second expression the \textbf{KM branch}; it uses \cite[Corollary 2.17]{KM} and is unavailable for \(a=1\). In the latter corollary, if \(A\subseteq\mathbb F_q^d\) has a \(T\)-rich \(r\)-flat in a fraction \(\theta\) of directions and \(T\ge8q\), then
\[
|A|\ge\frac{\theta T}{4}q^{d-r}\exp(-8qd/T).
\]
Our contribution is the quotient-and-average extension of \cite[Lemma 21]{DDL} in \cref{clm:quotient-average}, which retains linear dependence on \(\theta\) while working in dimension \(d=b+a\). We combine it with the local descent of Dhar and Dvir (\cref{clm:global-unbalanced} and \cref{clm:unbalanced-parent}). The stronger geometric estimate of Kumar and Mon yields the quadratic mean bound, and the geometric reduction of Ordentlich, Regev, and Weiss converts this into the quadratic smooth-covering bound.

Even retaining only the DD branch, quotient averaging and parameter optimization give sufficient mean \(O_{\tau,\delta}(q^2b/\log b)\) when \(q\asymp b\). \cref{sec:quantitative-comparisons} records the short calculation. The KM branch removes this remaining \(b/\log b\) factor. At fixed \(q\), choosing \(a=\log_qb+O_q(1)\) gives the mean bound \labelcref{eq:H7}.

\subsection{Entropy-loss lower bounds over fixed fields}
\label{sec:lower-bounds}

Alon et al. \cite[Proposition 2.2]{ADMPT} construct a set of at least
\[
(\ln2)(b-3\log_2b)\,2^b
\]
points in \(\mathbb F_2^{b+\lfloor b/10\rfloor}\) whose image under every linear map to \(\mathbb F_2^b\) omits a point. Taking a subset of size \(\Theta(b2^b)\) shows that some sources of entropy \(h=b+\log_2b+O(1)\) require loss at least \(\log_2h-O(1)\) whenever \(\tau<1\). Their Theorem 7 gives covering at the matching order of mean, without near-uniform multiplicity.

The next theorem refines this lower bound and extends it to every fixed finite field, with exact cardinality, smaller ambient dimension, and an explicit dependence on the discrepancy. Its random-source method is related both to \cite[Proposition 2.2]{ADMPT} and to the positive-tail construction in \cite[Appendix A, Lemma A.1]{KM}.

\begin{theorem}[Exact-cardinality lower bound]
\label{thm:lower-bound}
Fix a prime power \(q\), a discrepancy \(0<\tau<1\), and \(0<\eta<1\). For every sufficiently large integer \(h\), set
\begin{equation}
\ell=\left\lfloor
\log_q\left(\frac{(1-\eta)h\ln q}{(1+\tau)\ln(1+\tau)-\tau}\right)
\right\rfloor,
\qquad b=h-\ell.
\label{eq:L10}
\end{equation}
There exist \(n=h+\log_q h+O_{q,\tau,\eta}(1)\) and \(S\subseteq\mathbb F_q^n\) with \(|S|=q^h\) such that every linear map \(\mathbb F_q^n\to\mathbb F_q^b\) is unbalanced. The same source defeats every map to more than \(b\) output symbols. In particular, taking \(\eta=1/2\), a universal guarantee, even if we choose the map optimally, requires entropy loss at least
\begin{equation}
\log_q h-O_{q,\tau}(1).
\label{eq:L11}
\end{equation}
\end{theorem}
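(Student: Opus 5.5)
The plan is a random-source construction with a union bound over all linear maps. Set \(\mu=q^{\ell}\) and \(c_\tau=(1+\tau)\ln(1+\tau)-\tau\). Take \(n=h+\ell+m\) for a constant \(m=m(q,\tau,\eta)\) chosen below, so that \(n=h+\log_q h+O_{q,\tau,\eta}(1)\), and let \(S\) be a uniformly random \(q^h\)-subset of \(\mathbb F_q^n\). We want every linear \(L:\mathbb F_q^n\to\mathbb F_q^b\) to have some fiber with more than \((1+\tau)\mu\) points.

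\textbf{Easy reductions.} A non-surjective \(L\) has an empty fiber. Since \(\tau<1\), such an \(L\) is automatically unbalanced, so we only need to treat the at most \(q^{bn}\) surjections.

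\textbf{One surjection.} Fix a surjection \(L\). Its fibers are \(q^b\) disjoint cosets, each of size \(N=q^{n-b}=q^{\ell+m}\). The fiber counts follow a multivariate hypergeometric law, each with mean \(\mu\) and sampling fraction \(\mu/N=q^{-m}\). Multivariate hypergeometric counts are negatively associated, so the events \(\{|S\cap L^{-1}(y)|\le(1+\tau)\mu\}\) are decreasing in the counts and satisfy
\[
\Pr[L\text{ balanced}]\le\prod_y\Pr\bigl[|S\cap L^{-1}(y)|\le(1+\tau)\mu\bigr]\le\exp(-q^bp),
\]
where \(p\) is the upper-tail probability of one hypergeometric count.

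\textbf{Main obstacle: a sharp lower bound on \(p\).} I need
\[
p\ge\exp\bigl(-(1+\epsilon)c_\tau\mu-O(\log\mu)\bigr),
\]
with \(\epsilon\le\eta/2\). I would prove it directly from Stirling's formula applied to the single term \(\Pr[X=\lceil(1+\tau)\mu\rceil]\). The exponent is a finite-population KL divergence. As the sampling fraction \(q^{-m}\to0\) it converges to the Poisson rate \(c_\tau\mu\), and the relative error is \(O_\tau(q^{-m})\). Choosing \(m\) large in terms of \(q,\tau,\eta\) makes this error smaller than \(\eta/2\). This is exactly why the ambient dimension must exceed \(h\) by \(\log_q h+O(1)\) rather than by \(\ell\) alone.

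\textbf{Union bound.} By the floor in \eqref{eq:L10}, \(c_\tau\mu\le(1-\eta)h\ln q\). Therefore
\[
q^bp\ge q^{h-\ell}\,q^{-(1+\epsilon)(1-\eta)h}\,h^{-O(1)}\ge q^{\eta h/3}
\]
for large \(h\), using \(\ell=\log_q h+O(1)\). So each surjection is balanced with probability at most \(\exp(-q^{\eta h/3})\). Summing over at most \(q^{bn}=q^{O(h^2)}\) maps gives a total below \(1\) for large \(h\). Hence some \(S\) with \(|S|=q^h\) makes every map unbalanced. In fact every surjection then has an overfull fiber.

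\textbf{Larger outputs.} Suppose \(L':\mathbb F_q^n\to\mathbb F_q^{b'}\) with \(b'>b\) were \(\tau\)-balanced. Because \(\tau<1\), \(L'\) is surjective. Compose it with a coordinate projection \(\pi:\mathbb F_q^{b'}\to\mathbb F_q^b\). Each fiber of \(\pi L'\) is a union of \(q^{b'-b}\) fibers of \(L'\), each within factor \(1\pm\tau\) of \(|S|q^{-b'}\). So \(\pi L'\) is a \(\tau\)-balanced map to \(\mathbb F_q^b\), contradicting the construction.

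\textbf{Entropy-loss bound.} With \(\eta=1/2\), the loss is \(h-b=\ell=\log_q h-O_{q,\tau}(1)\). Any smaller loss means outputting more than \(b\) symbols, which the previous step rules out. This gives \eqref{eq:L11}.
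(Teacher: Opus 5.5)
Your argument follows the paper's proof in all essentials:
\begin{itemize}
\item a uniformly random $q^h$-subset;
\item the bound $(1-p_+)^{q^b}$ for the event that every fiber of one surjection is small (you invoke negative association of the multivariate hypergeometric, where the paper gives a direct monotone-coupling and covariance induction);
\item a single-point lower bound on $p_+$ with rate $\mu\bigl((1+\tau)\ln(1+\tau)-\tau\bigr)$;
\item a union bound (your count of $q^{bn}$ maps works as well as the paper's ${n\brack b}_q\le4q^{b(n-b)}$ kernels);
\item empty fibers for non-surjections, and composition for $b'>b$.
\end{itemize}

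There is, however, an arithmetic slip. With $n=h+\ell+m$ and $b=h-\ell$, each fiber has $q^{n-b}=q^{2\ell+m}=q^m\mu^2$ points, not $q^{\ell+m}$. So the per-point density is $q^{-m}/\mu$, not $q^{-m}$. This puts you in the same regime as the paper, up to constants: the paper takes the fiber size $F$ to be the least power of $q$ at least $32\mu^2$, and uses $q^b\ge4F$, which also holds for you once $h$ is large. In that regime the hypergeometric mass at $k$ is at least a constant times the Poisson mass. Hence $p_+\ge c\,\mu^{-1/2}e^{-\mu c_\tau}$ with no $(1+\epsilon)$ loss and no need to take $m$ large. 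Your claimed tail bound therefore still holds, and more easily than you argue.

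Conversely, your KL analysis at constant density $q^{-m}$ describes $n-b=\ell+m$, i.e.\ $n=h+m$. It is also valid there, and padding by zero coordinates then yields the stated $n$. So your remark that the tail estimate forces $n-h$ to be about $\log_q h$ conflates $n-h$ with $n-b$.

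Finally, use $k=\lfloor(1+\tau)\mu\rfloor+1$ instead of $\lceil(1+\tau)\mu\rceil$. A fiber with exactly $(1+\tau)\mu$ points is still balanced, and the corrected $k$ costs only a constant factor.
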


Combining this with \cref{cor:fixed-fields} gives worst-case entropy loss
\[
\log_qh+O_{q,\tau,\delta}(1)
\]
for fixed \(q,\tau,\delta\). The upper bound holds at arbitrary source cardinalities; the lower theorem holds at every sufficiently large cardinality \(q^h\) with integer \(h\), and defeats every map. This establishes the leading-order obstruction to any universal smaller loss. It does not establish optimal dependence on a varying failure probability or discrepancy.

\subsection{Smooth lattice coverings}
\label{sec:smooth-coverings}

For a full-rank lattice \(\Lambda\subseteq\mathbb R^n\), a compact convex body \(K\) with nonempty interior, and \(x\in\mathbb R^n\), write
\begin{equation}
N(\Lambda,K,x)=|\Lambda\cap(K+x)|,\qquad
\eta(K,\Lambda)=\sup_x
\left|\frac{N(\Lambda,K,x)}{\operatorname{vol}(K)/\det\Lambda}-1\right|.
\label{eq:C1}
\end{equation}
The mean multiplicity, or density, is \(\operatorname{vol}(K)/\det\Lambda\). We call the lattice covering \(\varepsilon\)-smooth if \(\eta(K,\Lambda)<\varepsilon\). Since \(\Lambda=-\Lambda\), this is also the relative variation in the number of translates \(\lambda+K\), \(\lambda\in\Lambda\), covering a point.

For smooth lattice coverings, \cite[Theorem 1.2]{ORW} is the direct predecessor. Their method discretizes the body, applies two-sided linear hashing, and uses invariance of random lattice neighbors. We retain this geometric reduction and improve its finite-field input. Let \(\mu_n\) denote Haar--Siegel probability, the invariant probability measure on full-rank lattices of determinant one (unimodular lattices); \(\varepsilon\) and \(\Delta\) denote the continuous discrepancy and failure probability.

The reduction produces finite-field sources with \(p\asymp_\varepsilon n\) and mean fiber size comparable to the target covering density. A quadratic mean bound gives quadratic covering density. We explain the reduction and our modifications in \cref{sec:covering-proof}.

The main ingredient that makes the \(O(n^2)\) bound possible is the Furstenberg estimate of Kumar and Mon \cite[Corollary 2.17]{KM}. With discrepancy and failure probability fixed, it gives the \(O(p^2)\) mean bound in \cref{cor:linear-field-cutoff}. If we instead use the earlier Furstenberg estimate of Dhar and Dvir, our optimized quotient-and-average argument gives a sufficient mean of \(O(p^2b/\log b)\), as shown in \cref{sec:quantitative-comparisons}. Since \(p\asymp n\) and \(b\asymp n\) at polynomial target densities, this yields a smooth-covering bound of \(O(n^3/\log n)\).

\begin{theorem}[quadratic smooth-covering bound]
\label{thm:smooth-covering}
There is an absolute constant \(C\) such that, for every \(0<\varepsilon,\Delta<1\), there is \(n_0=n_0(\varepsilon,\Delta)\) with the following property. For every \(n\ge n_0\) and every compact convex body \(K\subseteq\mathbb R^n\) with nonempty interior and
\begin{equation}
\operatorname{vol}(K)\ge
C\varepsilon^{-4}\Delta^{-1}n^2,
\label{eq:C2}
\end{equation}
one has
\begin{equation}
\Pr_{\Lambda\sim\mu_n}\bigl(\eta(K,\Lambda)\ge\varepsilon\bigr)<\Delta.
\label{eq:C3}
\end{equation}
We may take \(C=2^{52}\) and any integer \(n_0\ge5\) satisfying
\[
2^{19}n_0/\varepsilon\le2^{n_0},
\qquad 3\cdot2^{-n_0}<\Delta/2.
\]
We assume neither symmetry nor smoothness of \(K\). The dimension cutoff is uniform in \(K\).
\end{theorem}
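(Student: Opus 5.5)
We follow the reduction of Ordentlich, Regev, and Weiss \cite{ORW} and substitute \cref{cor:linear-field-cutoff} for their finite-field hashing input.

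\textbf{Step 1: discretize.} Choose a prime \(p\) with \(p\asymp n/\varepsilon\); by Bertrand's postulate we can take \(2^{18}n/\varepsilon\le p\le 2^{19}n/\varepsilon\). Pick a scale \(s>0\) and view \(\frac1{s}\mathbb Z^n\) modulo \(p\). The body \(K\) determines a source \(S\subseteq\mathbb F_p^n\): the reductions of the grid points in a suitable inner approximation \(K^-\), or in an outer approximation \(K^+\), of \(K\).

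\textbf{Step 2: hash to get lattices.} A uniform \(b\times n\) matrix \(L\) gives the lattice \(\Lambda_L=\{x\in\mathbb Z^n: Lx\equiv 0\ (\mathrm{mod}\ p)\}\), rescaled to covolume one. Counting \(\Lambda_L\)-points in a translate \(K+x\) becomes, up to boundary grid cells, counting a fiber \(|S\cap L^{-1}(y)|\). Convexity controls the boundary cells: \(\operatorname{vol}(K^+\setminus K^-)\) is small relative to \(\operatorname{vol}(K)\) once the grid is fine at scale \(\varepsilon/n\) relative to \(K\). We can first normalize \(K\) by an affine map, since Haar measure is invariant. Continuous translates \(x\) are reduced to finitely many grid shifts by sandwiching \(K+x\) between \(K^-+x'\) and \(K^++x'\).

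\textbf{Step 3: compare the distributions.} We need to compare the law of \(\Lambda_L\) with \(\mu_n\). ORW use the fact that, for \(p\to\infty\) with \(s\) chosen suitably, these Hecke-type random lattices equidistribute toward Haar--Siegel measure. More precisely, the law of \(\Lambda_L\) is invariant under the relevant arithmetic group up to rank-deficiency events of probability \(O(p^{-b})\le 3\cdot 2^{-n}\). The neighbor-invariance argument then transfers a failure bound for random \(L\) to a failure bound for \(\Lambda\sim\mu_n\), at the cost of an additive \(3\cdot2^{-n_0}<\Delta/2\). This accounts for that condition on \(n_0\).

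\textbf{Step 4: apply the corollary.} Set the hashing discrepancy to \(\tau\asymp\varepsilon\) and the failure probability to \(\delta=\Delta/2\). With \(b\le n\), the condition \(p\ge16(b+2)\) follows from \(p\gtrsim n/\varepsilon\) and \(n\ge n_0\). The mean fiber size is \(\mu\asymp\operatorname{vol}(K)\) by design. So \cref{cor:linear-field-cutoff} needs
\[
\mu\ge 192p^2/(\tau^2\delta)\asymp n^2\varepsilon^{-2}\cdot\varepsilon^{-2}\Delta^{-1}=n^2\varepsilon^{-4}\Delta^{-1},
\]
which is \eqref{eq:C2} with a large absolute constant such as \(C=2^{52}\). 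The condition \(2^{19}n_0/\varepsilon\le 2^{n_0}\) appears in making \(p^{-b}\)-type errors, and the comparison of \(p\) with \(2^n\), negligible.

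The main obstacle is Step 3 together with the boundary bookkeeping in Step 2. The fiber counts must be tied to lattice counts with relative error \(O(\varepsilon)\) uniformly over all translates and all \(K\), including nonsymmetric ones. The transfer from random \(L\) to \(\mu_n\) must also lose only an additive term. I would follow ORW's argument essentially verbatim here and only track constants, since the improvement from \(n^3\) to \(n^2\) comes entirely from replacing their hashing bound by \cref{cor:linear-field-cutoff}.
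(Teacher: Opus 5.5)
There is a genuine gap in Step 3, and it originates in Step 1. You discretize on a fixed grid \(\frac1s\mathbb Z^n\), after a deterministic affine normalization of \(K\). Then \(\Lambda_L\) ranges over the finitely many lattices between \(p\mathbb Z^n\) and \(\mathbb Z^n\), rescaled, so its law is a finite atomic measure, not \(\mu_n\). Invariance under an arithmetic group does not identify this law with \(\mu_n\). Hecke equidistribution is a limit statement as \(p\to\infty\) with \(n\) fixed. It gives no bound uniform in \(n\) and \(K\) at \(p\asymp n/\varepsilon\), let alone an additive error \(3\cdot2^{-n}\). This is also not what \cite{ORW} do.

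The paper starts from a Haar--Siegel lattice \(L_0\sim\mu_n\). It takes \(\Gamma=p^{r/n}L_0\) as the coarse lattice and \(L_f=p^{-1}\Gamma\) as the grid. It sets \(\Lambda_U=\pi_\Gamma^{-1}(U)\) for a uniform \(r\)-dimensional subspace \(U\) of \(L_f/\Gamma\), where \(r=\min\{r_0,n\}\) and \(r_0=\lceil\log_pV+n\log_p(4R)\rceil\), with \(V=\operatorname{vol}(K)\) and \(R=32\), are fixed before sampling. Exact random-neighbor invariance \cite[Proposition 2.1]{ORW22} then gives \(\Lambda_U\sim\mu_n\) with no error term. \cref{cor:linear-field-cutoff} is applied conditionally on each \(L_0\), which is legitimate because it is uniform over all sources.

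The random start also supplies the second ingredient you are missing: radius-ratio control. Fiber counts equal lattice-point counts only if reduction modulo \(\Gamma\) is injective on the grid points of \((1\pm s)K\), that is, if \(\Gamma\) packs \((1+s)K\). At the same time, the sandwiching and \cite[Lemma 3.3]{ORW} need \(L_f+sK=\mathbb R^n\). Since \(ps=c\le8192\), these two requirements force the covering-to-packing ratio of \(L_0\) with respect to \(K\) to be bounded by an absolute constant. The paper gets \(\rho_K(L_0)<32\) with probability greater than \(1-3\cdot2^{-n}\) from \cite[Proposition 3.6]{ORW} with \(\alpha=4\). That, not rank deficiency, is the origin of the condition \(3\cdot2^{-n_0}<\Delta/2\). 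A fixed grid cannot provide this bound for every \(K\) while keeping law \(\mu_n\).

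There are also smaller bookkeeping errors. The condition \(2^{19}n_0/\varepsilon\le2^{n_0}\) serves to give \(p^{1/n}\le2\), hence \(c=4R^2p^{1/n}\le8192\) and \(s\le\varepsilon/(32n)\), absorbing the integer rounding in \(r_0\). Two sources at \(\delta=\Delta/2\) already use the whole budget \(\Delta\), so you need \(\delta=\Delta/4\). Finally, you omit the regime \(r_0\ge n\) of very large volume. There no output dimension \(b\ge1\) exists, and the paper instead counts directly via \cite[Lemma 3.3]{ORW} using \(r_{\mathrm{cov},K}(L_0)<s\).
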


\textbf{Comparison with Ordentlich, Regev, and Weiss.} Their Theorem 1.2 gives the same probability conclusion for
\begin{equation}
\operatorname{vol}(K)\ge2^{66}(\varepsilon\Delta)^{-6.5}n^3,
\qquad n>25.
\label{eq:C4}
\end{equation}
\cref{thm:smooth-covering} reduces the dimension dependence from cubic to quadratic, and the displayed tolerance and failure costs to \(\varepsilon^{-4}\Delta^{-1}\). The new result has the qualification \(n\ge n_0(\varepsilon,\Delta)\); we do not claim that it improves the numerical threshold in every dimension. In \cref{cor:existence-density}, we take \(\Delta=1/2\) and rescale to obtain smooth lattice coverings at density \(O(\varepsilon^{-4}n^2)\).

\subsection{\texorpdfstring{The barrier for unrestricted hashing and Remark 3.7 of \cite{ORW}}{The barrier for unrestricted hashing and Remark 3.7 of Ordentlich, Regev, and Weiss}}
\label{sec:barrier}

At polynomial target volumes, the reduction of Ordentlich, Regev, and Weiss from smooth lattice coverings to finite-field hashing uses kernel dimension of order \(n/\ln p\). In this parameter regime, quadratic order is the limit of replacing the finite-field input by a stronger guarantee for arbitrary subsets. The construction providing the obstruction is \cite[Theorem 4]{KLSS}, specialized to rank four. For primes \(p\ge5\), it gives a set \(A\subseteq\mathbb F_p^d\) containing a translate of every four-dimensional subspace and satisfying
\[
|A|\le p^d
\left(1-\frac{p-3}{2p^4}\right)^{\lfloor d/5\rfloor}.
\]
An appropriate subset of its complement has an empty fiber under every map to \(\mathbb F_p^{d-4}\). Adding zero coordinates gives the growing kernel dimensions below.

\begin{corollary}[Quadratic obstruction in the covering regime]
\label{cor:quadratic-obstruction}
Fix \(C_0,c>0\). Along primes \(p\to\infty\), set
\begin{equation}
n=\lfloor p/C_0\rfloor,\qquad
r=\left\lceil\frac{cn}{\ln p}\right\rceil,\qquad
b=n-r.
\label{eq:L18}
\end{equation}
For all sufficiently large \(p\), there is a nonempty \(S\subseteq\mathbb F_p^n\) such that every linear map to \(\mathbb F_p^b\) has an empty fiber and
\begin{equation}
\mu=\frac{|S|}{p^b}
=(1+o(1))\frac{np}{10}
=(1+o(1))\frac{p^2}{10C_0},
\qquad
\frac{|S|}{p^n}=\exp(-cn+O(\ln p)).
\label{eq:L19}
\end{equation}
For every positive integer \(m\le |S|\), we can choose an \(m\)-element subset of \(S\) with the same empty-fiber property. Consequently, for fixed \(0<\tau,\delta<1\), no sufficient mean threshold \(o(n^2)\) can guarantee a balanced fraction \(1-\delta\) for every subset in this parameter range.
\end{corollary}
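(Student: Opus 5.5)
The plan is to derive the corollary directly from the rank-four construction \cite[Theorem 4]{KLSS} quoted above. Put \(d=b+4=n-r+4\) and embed a set from \(\mathbb F_p^d\) in \(\mathbb F_p^n\) by padding with \(r-4\) zero coordinates. For large \(p\) we have \(r\to\infty\) while \(r=O(n/\ln p)=o(n)\). Hence \(r\ge4\), \(b\ge1\) and \(d=n(1-o(1))\), and we may take \(p\ge5\).

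\textbf{Step 1 (construction and empty fibers).} Let \(A\subseteq\mathbb F_p^d\) be the KLSS set, which contains a translate of every four-dimensional subspace. Put \(S_0=\mathbb F_p^d\setminus A\) and \(S=S_0\times\{0\}^{r-4}\subseteq\mathbb F_p^n\). Fix any linear map \(L:\mathbb F_p^n\to\mathbb F_p^b\) and let \(L_0\) be its restriction to \(\mathbb F_p^d\times\{0\}\).
\begin{itemize}
\item If \(L_0\) has rank \(b=d-4\), then \(\ker L_0\) is a four-dimensional subspace of \(\mathbb F_p^d\). Some translate \(x_0+\ker L_0\) lies in \(A\). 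The fiber of \(L\) over \(L(x_0)\) meets \(\mathbb F_p^d\times\{0\}\) exactly in that translate, so it misses \(S\).
\item If \(L_0\) has rank less than \(b\), pick \(y\) outside the image of \(L_0\). The fiber over \(y\) contains no point of \(\mathbb F_p^d\times\{0\}\), hence no point of \(S\).
\end{itemize}
In both cases \(L\) has an empty fiber on \(S\). The property passes to every subset of \(S\), since a subset of an empty fiber is empty. This gives the claim about \(m\)-element subsets.

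\textbf{Step 2 (cardinality).} Set \(x=(p-3)/(2p^4)\) and \(k=\lfloor d/5\rfloor\). Then
\[
|S|\ge p^d\bigl(1-(1-x)^k\bigr),
\qquad
kx=(1+o(1))\frac{n}{10p^3}=O(p^{-2})\to0 .
\]
Since \(1-(1-x)^k=kx(1+O(kx))\), we get \(|S|\ge(1+o(1))\,p^d\,n/(10p^3)\). For the matching upper estimate, use \(|A|\ge p^d(1-kx(1+o(1)))\), or simply discard points of \(S\) until the asymptotic equality holds. The subset statement makes this trimming free. Then
\[
\mu=\frac{|S|}{p^{d-4}}=(1+o(1))\frac{np}{10}=(1+o(1))\frac{p^2}{10C_0},
\]
using \(n=(1+o(1))p/C_0\). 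Finally,
\[
\frac{|S|}{p^n}=\mu\,p^{-r}=\exp\bigl(-r\ln p+O(\ln p)\bigr)=\exp\bigl(-cn+O(\ln p)\bigr),
\]
because \(r\ln p=cn+O(\ln p)\).

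\textbf{Step 3 (consequence).} Suppose some threshold \(f(n)=o(n^2)\) guaranteed a balanced fraction \(1-\delta\). Choose \(m=\lceil f(n)p^b\rceil\). For large \(p\) this satisfies \(m\le|S|\), because \(\mu\asymp C_0n^2/10\) eventually exceeds \(f(n)\). By Step 1 the corresponding \(m\)-subset has an empty fiber under every map. For \(\tau<1\) an empty fiber makes the map \(\tau\)-unbalanced, so \(\beta=1>\delta\), a contradiction.

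The main obstacle is the one this plan already addresses. A linear map on \(\mathbb F_p^n\) may restrict to a map with kernel larger than four dimensions, and KLSS only supplies translates of four-dimensional subspaces. The choice \(d=b+4\) resolves this: a larger kernel forces a rank drop, and the rank drop itself produces an empty fiber. The remaining step that needs care is the two-sided asymptotic for \(1-(1-x)^k\) in Step 2.
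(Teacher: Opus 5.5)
Your proposal is correct and follows essentially the same route as the paper. The paper also takes the rank-four KLSS Kakeya set in $\mathbb F_p^{d}$ with $d=n-r+4$, pads its complement with $r-4$ zero coordinates, splits on whether the restricted map to $\mathbb F_p^{d}$ is surjective, and uses $1-(1-x)^k=(1+o(1))kx$.

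Drop your first option for the upper estimate: KLSS gives only an upper bound on the Kakeya set, not the lower bound $|A|\ge p^d(1-kx(1+o(1)))$. Your second option, trimming the complement to exactly $\lfloor p^d[1-(1-x)^k]\rfloor$ points, is exactly what the paper does.
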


Remark 3.7 of \cite{ORW} considers an unrestricted hypothesis requiring balance whenever \(p>p_*(n)\) and
\begin{equation}
r>m_*+n-\log_p|S|,
\qquad\text{equivalently}\qquad \mu>p^{m_*}.
\label{eq:L21}
\end{equation}
If \(p_*(n)\le C_*n\), choose \(C_0>C_*\) in \cref{cor:quadratic-obstruction}. Its source satisfies \(\mu>p^{m_*}\) for every fixed \(m_*<2\), while the balanced fraction is zero, even at \(\tau=\delta=1/2\). Thus the proposed finite-field hypothesis is false with a linear field cutoff and exponent below two. The growing kernel dimension and sparse density in \labelcref{eq:L19} show that imposing those two features alone does not remove the counterexample.

This does not invalidate the conditional implication of Ordentlich, Regev, and Weiss. \cref{cor:quadratic-obstruction} excludes improving the quadratic order solely by an unrestricted all-set hashing bound in their reduction, including a saving by any factor tending to infinity. It is not a quadratic lower bound for smooth lattice coverings themselves. Additional properties of discretized convex bodies or a different reduction can lie outside this obstruction.

The same construction explains the integer-output rounding in \cref{cor:linear-field-cutoff}. With rank four, \(n=\lfloor p/C_0\rfloor\), and \(C_0\ge32\), its bad target \(b=n-4\) has entropy
\[
h=b+2-\log_p(10C_0)+o(1).
\]
Every successful integer output is at most \(b-1\), requiring loss at least \(3-o(1)\). \cref{cor:linear-field-cutoff} does apply at \(b-1\) for sufficiently large \(p\), where the mean is of order \(p^3\). Thus \cref{cor:linear-field-cutoff} attains the largest possible integer output on these examples: its loss is below \(3+o(1)\), while its continuous threshold is \(2+o(1)\).

\subsection{Organization}
\label{sec:organization}

\cref{sec:projection-proofs} proves the master theorem, derives the four projection corollaries, and treats uniform random matrices. \cref{sec:lower-proof} proves the exact-cardinality lower bound. \cref{sec:kakeya-obstruction} deduces the Kakeya obstructions from \cite{KLSS}. \cref{sec:covering-proof} proves the smooth-covering theorem using the discretization and invariant-measure inputs of \cite{ORW} and \cite{ORW22}. All logarithms without an indicated base are natural.

\section{Proofs of the projection theorems}
\label{sec:projection-proofs}

We use the notation and balanced/unbalanced definitions of \cref{sec:definitions}. \cref{sec:master-proof} proves the master theorem for surjections, \cref{sec:corollary-proofs} derives the four projection corollaries, and \cref{sec:rank-transfer} proves the extension to unrestricted random matrices. \cref{sec:quantitative-comparisons} verifies the quantitative comparisons from the introduction.

\subsection{Proof of the master theorem for surjections}
\label{sec:master-proof}

We use two geometric estimates. For \(X\subseteq\mathbb F_q^d\), we call an affine \(a\)-flat \(R\) \textbf{\(\gamma\)-dense} if \(X\) has density at least \(\gamma\) on \(R\), that is, \(|X\cap R|\ge\gamma|R|=\gamma q^a\).

The first input is \cite[Theorem 6.14]{DD}: if a \(\theta\)-fraction of the \(a\)-directions have a \(\gamma\)-dense translate, then
\begin{equation*}
\frac{|X|}{q^d}\ge
\theta\left(\frac{\gamma}{1+q^{1-a}}\right)^d.
\tag{DD}\label{eq:H-DD}
\end{equation*}
Here \(1\le a\le d\) and \(0\le\theta,\gamma\le1\). The second input is \cite[Corollary 2.17]{KM}: under the same density hypothesis, if \(\gamma q^{a-1}\ge8\), then
\begin{equation*}
\frac{|X|}{q^d}\ge
\frac{\theta\gamma}{4}
\exp\!\left(-\frac{8d}{\gamma q^{a-1}}\right).
\tag{KM}\label{eq:H-KM}
\end{equation*}
This is Corollary 2.17 of \cite{KM} with their parameters \(n=d\), \(r=a\), \(\delta=\theta\), and \(T=\gamma q^a\), after dividing by \(q^d\). Their hypothesis \(T\ge8q\) is exactly \(\gamma q^{a-1}\ge8\). If the source theorem requires an integer threshold, we use \(T=\lceil\gamma q^a\rceil\); the conclusion above follows because \(T\exp(-8qd/T)\) is increasing for \(T>0\). The case \(\theta=0\) is trivial.

\Needspace{8\baselineskip}
Our first step is to convert these estimates for point sets into a counting bound for families of affine flats. Quotienting by a direction turns its affine translates into points, so we can apply a Furstenberg estimate in each quotient and then average. The following claim generalizes the quotient counting step in \cite[Lemma 21]{DDL}: we allow both a fraction of pairs of nested directions and a fraction of the parallel smaller flats in each witnessing translate.

\begin{claim}[Quotient-and-average counting]
\label{clm:quotient-average}
Let \(1\le\ell<k<n\), \(0\le\alpha\le1\), and \(0<\gamma\le1\), and put \(a=k-\ell\) and \(d=n-\ell\). Let \(\mathcal B\) be a family of affine \(\ell\)-flats in \(\mathbb F_q^n\), of density \(p_{\mathcal B}\) among all such flats. Suppose that an \(\alpha\)-fraction of the containment flags
\[
(W,U)\in\mathcal G_\ell(n)\times\mathcal G_k(n),\qquad W\subset U,
\]
admit an affine translate of \(U\) containing at least \(\gamma q^a\) members of \(\mathcal B\) parallel to \(W\). Then
\begin{equation}
p_{\mathcal B}\ge\alpha
\max\left\{
\left(\frac{\gamma}{1+q^{1-a}}\right)^d,
\quad
\frac{\gamma}{4}
\exp\!\left(-\frac{8d}{\gamma q^{a-1}}\right)
\right\},
\label{eq:H2}
\end{equation}
where we omit the second expression unless \(\gamma q^{a-1}\ge8\).
\end{claim}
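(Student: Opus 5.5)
For each direction \(W\in\mathcal G_\ell(n)\), we pass to the quotient \(V_W=\mathbb F_q^n/W\cong\mathbb F_q^d\) with \(d=n-\ell\). The affine translates of \(W\) correspond bijectively to the points of \(V_W\). Let \(X_W\subseteq V_W\) be the set of points corresponding to members of \(\mathcal B\) parallel to \(W\). The directions \(U\supset W\) with \(\dim U=k\) correspond bijectively to the \(a\)-dimensional subspaces \(U/W\) of \(V_W\), since \(a=k-\ell\). An affine translate \(x+U\) corresponds to the affine \(a\)-flat \(\pi_W(x)+U/W\). The members of \(\mathcal B\) parallel to \(W\) contained in \(x+U\) are exactly the points of \(X_W\) on that flat. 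Hence the flag \((W,U)\) is witnessing exactly when the \(a\)-direction \(U/W\) has a \(\gamma\)-dense translate for \(X_W\), that is, a translate meeting \(X_W\) in at least \(\gamma q^a\) points.

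Let \(\alpha_W\) be the fraction of \(k\)-spaces \(U\supset W\) that form witnessing flags with \(W\). Equivalently, \(\alpha_W\) is the fraction of \(\mathcal G_a(d)\) admitting a \(\gamma\)-dense translate for \(X_W\). Apply \eqref{eq:H-DD} in \(V_W\) with \(\theta=\alpha_W\). When \(\gamma q^{a-1}\ge8\), also apply \eqref{eq:H-KM}. This gives
\[
\frac{|X_W|}{q^d}\ge\alpha_W\,M,\qquad
M=\max\left\{\left(\frac{\gamma}{1+q^{1-a}}\right)^d,\ \frac{\gamma}{4}\exp\!\left(-\frac{8d}{\gamma q^{a-1}}\right)\right\},
\]
where \(M\) does not depend on \(W\). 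The hypotheses \(1\le a\le d\) hold because \(\ell<k<n\).

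Next we average over \(W\). Every \(W\) has exactly \(q^d\) translates, so the number of affine \(\ell\)-flats is \(|\mathcal G_\ell(n)|q^d\). Therefore
\[
p_{\mathcal B}=\frac{1}{|\mathcal G_\ell(n)|}\sum_W\frac{|X_W|}{q^d}.
\]
The number of \(k\)-spaces containing a given \(W\) is the same for every \(W\); it equals \(|\mathcal G_a(d)|\). So the uniform distribution on flags \((W,U)\) has a uniform marginal on \(W\). It follows that the flag fraction \(\alpha\) equals the average of \(\alpha_W\) over \(W\). Averaging the per-quotient bound then gives \(p_{\mathcal B}\ge\alpha M\), which is \eqref{eq:H2}. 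The key point is that both geometric inputs are linear in \(\theta\), so averaging preserves the linear dependence on \(\alpha\).

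The main point requiring care is the identification step. We must check that the quotient map sends translates of \(U\) onto \(a\)-flats with the correct density normalization: a translate of \(U\) contains \(q^{a}\) translates of \(W\), matching \(|R|=q^a\) for an affine \(a\)-flat \(R\) in \(V_W\). We must also check that directions in \(V_W\) are counted uniformly, so that \(\alpha_W\) really is the \(\theta\) in \eqref{eq:H-DD} and \eqref{eq:H-KM}. Both facts follow from the correspondence theorem for subspaces containing \(W\). The remaining issue is the integer-threshold remark for \eqref{eq:H-KM}, which the paper already handles.
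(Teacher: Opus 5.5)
Your proposal is correct and matches the paper's proof essentially step for step. You quotient by each $W$, identify witnessing flags $(W,U)$ with $\gamma$-dense translates of $U/W$ for $X_W$, apply the two Furstenberg estimates with $\theta=\alpha_W$, and average using the uniform $W$-marginal of the flag measure together with $p_{\mathcal B}=\mathbb E_W|X_W|/q^d$; the only cosmetic difference is that you write $\mathbb E_W\alpha_W=\alpha$, whereas the paper states $\mathbb E_W\theta_W\ge\alpha$, which also covers reading the hypothesis as ``at least an $\alpha$-fraction.''
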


\begin{proof}
For each \(W\in\mathcal G_\ell(n)\), its affine translates are the points of the quotient \(\mathbb F_q^n/W\), a vector space of dimension \(d\). Let
\[
X_W=\{x+W:x+W\in\mathcal B\}.
\]
Let \(\theta_W\) be the fraction of \(k\)-directions \(U\supset W\) for which the hypothesis holds. The correspondence
\[
U\longmapsto U/W
\]
is a bijection from these \(k\)-directions to the \(a\)-directions in the quotient. Indeed, its inverse takes a quotient subspace to its full inverse image in \(\mathbb F_q^n\).

If \(z+U\) witnesses the hypothesis for \((W,U)\), its image in the quotient is the affine \(a\)-flat \((z+W)+U/W\). Its \(q^a\) points are precisely the translates of \(W\) inside \(z+U\); at least \(\gamma q^a\) belong to \(X_W\). Thus \(X_W\) has a \(\gamma\)-dense translate in a \(\theta_W\)-fraction of its \(a\)-directions. Each of \labelcref{eq:H-DD} and, when admissible, \labelcref{eq:H-KM} bounds \(|X_W|/q^d\) below by \(\theta_W\) times the corresponding expression in \labelcref{eq:H2}.

The uniform containment-flag measure has uniform marginals on \(\mathcal G_\ell(n)\) and \(\mathcal G_k(n)\). This follows because each subspace of either dimension lies in the same number of flags. Hence
\[
\mathbb E_W\theta_W\ge\alpha.
\]
Every \(W\) has exactly \(q^d\) translates, and each affine flat has a unique direction. Consequently
\[
p_{\mathcal B}=\mathbb E_W\frac{|X_W|}{q^d}.
\]
Averaging the two geometric inequalities proves the claim. \end{proof}

\Needspace{8\baselineskip}
To apply this counting bound, we first show that unbalanced flats are rare: a random affine flat has intersection size concentrated near its global mean when that mean is large.

\begin{claim}[Global density of unbalanced flats]
\label{clm:global-unbalanced}
Let \(1\le\ell<n\), let \(E=|S|q^{\ell-n}\), and let \(\sigma>0\). The density of \(\sigma\)-unbalanced affine \(\ell\)-flats is at most
\begin{equation}
\frac1{\sigma^2E}.
\label{eq:H3}
\end{equation}
\end{claim}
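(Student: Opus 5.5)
The plan is a second-moment (Chebyshev) argument over a uniformly random affine \(\ell\)-flat \(R\). Write \(Z=|S\cap R|\). First I will compute \(\mathbb E Z\) and \(\operatorname{Var}Z\) exactly by double counting. Then Chebyshev turns the variance bound into the density bound \labelcref{eq:H3}, since \(R\) is \(\sigma\)-unbalanced exactly when \(|Z-E|>\sigma E\).

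\emph{First moment.} The affine \(\ell\)-flats form a transitive family under translations and \(\mathrm{GL}_n\). Hence each point of \(\mathbb F_q^n\) lies in \(R\) with probability \(q^{\ell-n}\), and
\[
\mathbb E Z=\sum_{x\in S}\Pr[x\in R]=|S|q^{\ell-n}=E .
\]

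\emph{Second moment.} Expand
\[
\mathbb E Z^2=\sum_{x,y\in S}\Pr[x,y\in R].
\]
The diagonal terms \(x=y\) contribute exactly \(E\). For an ordered pair \(x\neq y\), the probability \(\Pr[x,y\in R]\) is the same for all such pairs, again by affine transitivity on ordered pairs of distinct points. Call it \(\pi\). To compute \(\pi\), condition on \(x\in R\), so that \(R-x\) is a uniform \(\ell\)-dimensional subspace \(W\). Then \(\pi=q^{\ell-n}\cdot\Pr[y-x\in W]\). A fixed nonzero vector lies in a uniform \(\ell\)-subspace with probability
\[
\frac{q^\ell-1}{q^n-1}\le q^{\ell-n},
\]
obtained by counting the nonzero vectors of \(W\) against all nonzero vectors. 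Hence \(\pi\le q^{2(\ell-n)}\), and the off-diagonal sum is at most
\[
|S|(|S|-1)q^{2(\ell-n)}\le E^2 .
\]
Combining the two parts gives \(\mathbb E Z^2\le E+E^2\), so \(\operatorname{Var}Z\le E\).

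\emph{Conclusion.} Chebyshev gives
\[
\Pr\bigl[|Z-E|>\sigma E\bigr]\le\frac{\operatorname{Var}Z}{\sigma^2E^2}\le\frac{1}{\sigma^2E}.
\]
This is the density of \(\sigma\)-unbalanced flats among all affine \(\ell\)-flats, which proves \labelcref{eq:H3}. Note that the balance definition uses the ambient mean \(|S|q^{\ell-n}=E\), which matches the expectation computed here.

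The only step needing care is the inequality \((q^\ell-1)/(q^n-1)\le q^{\ell-n}\), which drives the negative-correlation bound on pairs. It holds because \(\ell<n\), via \(q^n(q^\ell-1)\le q^\ell(q^n-1)\), which is equivalent to \(q^\ell\le q^n\). Everything else is routine symmetry.
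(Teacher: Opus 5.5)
Your proposal is correct and follows essentially the same route as the paper. The paper also bounds the probability that two distinct points both lie in the flat by \(q^{\ell-n}(q^\ell-1)/(q^n-1)\le q^{2(\ell-n)}\), concludes that the variance is at most the mean \(E\), and applies Chebyshev; it only states the pair estimate for a flat in a general ambient affine space so that it can be reused in the next claim.
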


\begin{proof}
For a uniform affine \(j\)-flat in an affine \(m\)-space and distinct points \(x,y\),
\[
\Pr[x,y\text{ both belong to the flat}]
=q^{j-m}\frac{q^j-1}{q^m-1}
\le q^{2(j-m)}.
\]
Thus the intersection count of a fixed set with such a flat has variance at most its mean. In the present case that mean is \(E\), and Chebyshev's inequality gives \labelcref{eq:H3}. \end{proof}

\Needspace{8\baselineskip}
Inside an unbalanced parent flat, the intersection size of a random subflat concentrates around a mean displaced from the global mean, so balance at a stricter tolerance is unlikely when the expected counts are large.

\begin{claim}[Unbalanced parents have few balanced subflats]
\label{clm:unbalanced-parent}
Let \(1\le\ell<k<n\), put \(a=k-\ell\) and \(E=|S|q^{\ell-n}\), and fix \(0<\sigma<\tau\). If \(T\) is a \(\tau\)-unbalanced affine \(k\)-flat, then a uniform affine \(\ell\)-flat \(R\subset T\) is \(\sigma\)-balanced with probability at most
\begin{equation}
\frac{1+\tau}{(\tau-\sigma)^2E}.
\label{eq:H4}
\end{equation}
\end{claim}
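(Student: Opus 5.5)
Let \(T\) be a \(\tau\)-unbalanced affine \(k\)-flat, and put \(M=|S\cap T|\). The expected count in a uniform affine \(\ell\)-subflat \(R\subset T\) is \(\nu=Mq^{-a}\). The global \(\ell\)-flat mean is \(E\), while the global \(k\)-flat mean is \(Eq^a\). Unbalance of \(T\) means \(|M-Eq^a|>\tau Eq^a\), that is, \(|\nu-E|>\tau E\). If \(R\) is \(\sigma\)-balanced, then \(|X-E|\le\sigma E\), where \(X=|S\cap R|\). The triangle inequality then gives
\[
|X-\nu|\ge|\nu-E|-|X-E|>(\tau-\sigma)E.
\]
So the plan is a Chebyshev bound for \(X\) around its local mean \(\nu\), exactly as in \cref{clm:global-unbalanced}, but now inside the ambient affine \(k\)-space \(T\).

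First I will compute the variance. Apply the pair-inclusion estimate from the proof of \cref{clm:global-unbalanced} with \(j=\ell\) and \(m=k\). The probability that two distinct points of \(T\) both lie in \(R\) is at most \(q^{2(\ell-k)}\), so all covariances are nonpositive. Hence \(\operatorname{Var}X\le\mathbb E X=\nu\), and Chebyshev gives
\[
\Pr[R\text{ is }\sigma\text{-balanced}]\le\frac{\nu}{(\tau-\sigma)^2E^2}.
\]

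The main obstacle is that \(\nu\) is not bounded by \((1+\tau)E\) when \(T\) has positive discrepancy: \(\nu\) can be arbitrarily large. So I will split into two cases.

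If \(\nu<(1-\tau)E\), which is the negative-discrepancy case, then \(\nu\le E\le(1+\tau)E\). The displayed bound then gives \((1+\tau)/((\tau-\sigma)^2E)\) directly.

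If \(\nu>(1+\tau)E\), I will use the sharper deviation \(|X-\nu|\ge\nu-(1+\sigma)E\), which holds because \(X\le(1+\sigma)E\) on the event. Chebyshev then gives the bound
\[
\frac{\nu}{(\nu-(1+\sigma)E)^2}.
\]
Write \(\nu=(1+\sigma)E+t\) with \(t>(\tau-\sigma)E\). The function \(((1+\sigma)E+t)/t^2\) is decreasing in \(t>0\). So it is at most its value at \(t=(\tau-\sigma)E\), which is \((1+\tau)/((\tau-\sigma)^2E)\). This gives exactly \labelcref{eq:H4}.

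The case \(\tau\ge1\) needs no separate treatment: the negative case is then vacuous, since \(\nu<(1-\tau)E\le0\) is impossible.

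Throughout, the uniform affine \(\ell\)-subflat of \(T\) is just a uniform affine \(\ell\)-flat in an affine \(k\)-space. The pair estimate from \cref{clm:global-unbalanced} therefore applies verbatim after identifying \(T\) with \(\mathbb F_q^k\).
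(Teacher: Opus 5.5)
Your proof is correct and follows essentially the same route as the paper: Chebyshev inside $T$ with $\operatorname{Var}X\le\nu$, and in the positive-discrepancy case the same monotonicity argument for $\nu/(\nu-(1+\sigma)E)^2$. The only difference is the negative-discrepancy case. There you use the triangle-inequality deviation $(\tau-\sigma)E$ together with $\nu\le E$, whereas the paper uses the deviation $(1-\sigma)E-\nu$ and monotonicity to obtain the slightly sharper $(1-\tau)/((\tau-\sigma)^2E)$; either suffices for the claim.
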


\begin{proof}
Put \(X=|S\cap R|\) and \(u=|S\cap T|/q^a\). The variance estimate in \cref{clm:global-unbalanced}, now inside \(T\), gives \(\operatorname{Var}X\le u\). If \(u>(1+\tau)E\), balance requires \(X\le(1+\sigma)E\), so
\[
\Pr[R\text{ balanced}]
\le\frac{u}{(u-(1+\sigma)E)^2}
\le\frac{1+\tau}{(\tau-\sigma)^2E}.
\]
The last step uses that \(u/(u-c)^2\) decreases for \(u>c\ge0\).

The other possibility is \(u<(1-\tau)E\), which can occur only when \(\tau<1\). Then \(\sigma<1\), and balance requires \(X\ge(1-\sigma)E\). The same estimate gives
\[
\Pr[R\text{ balanced}]
\le\frac{u}{((1-\sigma)E-u)^2}
\le\frac{1-\tau}{(\tau-\sigma)^2E},
\]
using that \(u/(c-u)^2\) increases for \(0\le u<c\). This is no larger than \labelcref{eq:H4}. \end{proof}

\begin{proof}[Completion of the proof of \cref{thm:master} for surjections]
Set
\[
k=n-b,\qquad \ell=k-a,\qquad \sigma=\tau/2,
\]
and let \(\mathcal B\) be the family of \(\sigma\)-unbalanced affine \(\ell\)-flats. \cref{clm:global-unbalanced} gives \(p_{\mathcal B}\le4/(\tau^2E)\).

A uniform surjection has a uniform \(k\)-dimensional kernel: for each kernel there are equally many isomorphisms from its quotient to \(\mathbb F_q^b\). A kernel fails to be \(\tau\)-shift-balanced exactly when at least one of its affine translates is \(\tau\)-unbalanced. For each such direction \(U\), select one \(\tau\)-unbalanced translate \(T_U\).

For \(W\in\mathcal G_\ell(n)\) with \(W\subset U\), let \(f_U(W)\) be the fraction of the \(q^a\) translates of \(W\) inside \(T_U\) that are \(\sigma\)-balanced. Uniformly sampling \(W\subset U\) and then one of these translates samples a uniform affine \(\ell\)-subflat of \(T_U\). \cref{clm:unbalanced-parent} therefore gives
\[
\mathbb E_{W\subset U}f_U(W)\le
\varepsilon_0:=\frac{4(1+\tau)}{\tau^2E}.
\]
Markov's inequality shows that at least half these \(W\) have \(f_U(W)\le2\varepsilon_0\). For each such flag, at least
\[
(1-2\varepsilon_0)q^a=\gamma q^a
\]
parallel subflats of \(T_U\) belong to \(\mathcal B\).

Thus \cref{clm:quotient-average} applies with \(\alpha\ge\beta_S(b,\tau)/2\) and \(d=b+a\). Combining its lower bound with \cref{clm:global-unbalanced} and inverting the positive geometric factors proves \labelcref{eq:H1}. \end{proof}

\subsection{Proofs of the projection corollaries}
\label{sec:corollary-proofs}

Throughout these proofs, \(L\) is uniform among either all linear maps or the surjections with the indicated domain and codomain. We use \cref{thm:master} with leading constant \(9\); \cref{sec:rank-transfer} proves its unrestricted-matrix extension.

\begin{proof}[Proof of \cref{cor:very-large-fields}]
Use \(a=1\), so \(E=\mu/q\). Since \(E>1\) and \(|S|\le q^n\), the source-size hypothesis implies \(n>b+1\). Put \(d=b+1\). Because \(d\le2^b\),
\[
d(1-\gamma)
=\frac{8(1+\tau)d}{\tau^2E}
\le\frac{\delta}{2}.
\]
In particular \(\gamma>0\). Bernoulli's inequality with exponent \(d/2\ge1\) gives \(\gamma^{d/2}\ge1-\delta/4\). The first branch of \cref{thm:master}, with leading constant \(9\), yields
\[
\Pr[L\text{ is }\tau\text{-unbalanced}]
\le\frac{9\,2^{b+1}}{\tau^2E\gamma^{b+1}}
\le\frac{9\delta}{16(1-\delta/4)^2}<\delta,
\]
where the last inequality uses \(\delta<1\). \end{proof}

\begin{proof}[Proof of \cref{cor:mean-bounds}]
Set \(E=|S|/q^{b+a}\). The size hypothesis gives \(E\ge192/(\tau^2\delta)>1\), and hence \(b+a<n\). Also \(1-\gamma\le(1+\tau)\delta/24<1/12\), so in particular \(\gamma\ge3/4\). The field condition implies \(\gamma q^{a-1}\ge8\) and
\[
\frac{8(b+a)}{\gamma q^{a-1}}\le1.
\]
The second branch of \cref{thm:master}, with leading constant \(9\), therefore gives
\begin{equation}
\Pr[L\text{ is }\tau\text{-unbalanced}]
\le\frac{48e}{\tau^2E}
\le\frac e4\,\delta<\delta.
\label{eq:H6b}
\end{equation}

For the fixed-field conclusion \labelcref{eq:H7}, take the least positive integer \(a\) with \(q^{a-1}\ge16(b+a)\). It is \(\log_qb+O_q(1)\): the lower bound follows from \(q^{a-1}\ge16b\), and \(a=\lceil\log_qb\rceil+c_q\) is admissible for every \(b\ge1\) when \(c_q\) is sufficiently large. Thus \(q^a=O_q(b)\), and \labelcref{eq:H6} proves \labelcref{eq:H7}. \end{proof}

\begin{proof}[Proof of \cref{cor:linear-field-cutoff}]
Apply \cref{cor:mean-bounds} with \(a=2\). \end{proof}

\begin{proof}[Proof of \cref{cor:fixed-fields}]
Set \(a=1+\lceil\log_q(16h)\rceil\) and \(E_0=192/(\tau^2\delta)\). The choice \labelcref{eq:H7a} gives \(b=\lfloor h-a-\log_qE_0\rfloor\), so \(b+a\le h-\log_qE_0<h\le n\) and
\[
q^{a-1}\ge16h\ge16(b+a),
\qquad
\frac{|S|}{q^{b+a}}\ge E_0.
\]
Apply \cref{cor:mean-bounds}, which proves the corollary. \end{proof}

The choice of descent dimension satisfies \(a<2+\log_q(16h)\), and the definition of \(b\) gives \(b>h-a-\log_qE_0-1\). Thus
\begin{equation}
b>h-\log_qh-\log_q\frac1{\tau^2\delta}-3-\log_q3072.
\label{eq:H7b}
\end{equation}
In particular, over \(\mathbb F_2\),
\begin{equation}
b>h-\log_2h-2\log_2(1/\tau)-\log_2(1/\delta)-15,
\label{eq:H7c}
\end{equation}
since \(3+\log_2 3072<15\).

\subsection{Extension to unrestricted random matrices}
\label{sec:rank-transfer}

\begin{proposition}[Rank transfer]
\label{prop:rank-transfer}
For an arbitrary matrix, define balance by the same all-fiber inequality as for a surjection. Let \(A\) be uniform among all \(b\times n\) matrices over \(\mathbb F_q\), with \(1\le b\le n\), and put
\[
r_{n,b,q}=1-\prod_{i=0}^{b-1}(1-q^{i-n}).
\]
Conditional on full row rank, \(A\) is a uniform surjection. Therefore
\begin{equation}
\Pr[A\text{ is }\tau\text{-unbalanced}]
\le\beta_S(b,\tau)+(1-\beta_S(b,\tau))r_{n,b,q}
\le\beta_S(b,\tau)+r_{n,b,q},
\label{eq:H8}
\end{equation}
and
\begin{equation}
r_{n,b,q}\le
\sum_{i=0}^{b-1}q^{i-n}
=\frac{q^b-1}{(q-1)q^n}
<\frac{q^{b-n}}{q-1}.
\label{eq:H8a}
\end{equation}
For \(0<\tau<1\), every rank-deficient matrix is unbalanced, so the first inequality in \labelcref{eq:H8} is an equality.
\end{proposition}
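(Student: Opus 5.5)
We condition on the rank of \(A\) and prove the four assertions in turn: uniformity on full rank, the total-probability bound \labelcref{eq:H8}, the union bound \labelcref{eq:H8a}, and the equality case.

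\textbf{Uniformity on full rank.} The uniform measure on all \(b\times n\) matrices is uniform on every subset. In particular it is uniform on the full-row-rank matrices, and these are exactly the surjections \(\mathbb F_q^n\to\mathbb F_q^b\). So conditioning on the event \(F\) of full row rank gives a uniform surjection, and
\[
\Pr[A\text{ unbalanced}\mid F]=\beta_S(b,\tau).
\]

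\textbf{The bound \labelcref{eq:H8}.} Since \(\Pr[F^c]=r_{n,b,q}\), we have \(\Pr[F]=1-r\). Bounding the conditional probability of being unbalanced on \(F^c\) by one, total probability gives
\[
\Pr[A\text{ unbalanced}]\le\beta(1-r)+r=\beta+(1-\beta)r\le\beta+r.
\]

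\textbf{The formula for \(r_{n,b,q}\) and the bound \labelcref{eq:H8a}.} Choose the rows of \(A\) sequentially. Given that the first \(i\) rows are independent, the \((i+1)\)-st row is dependent on them exactly when it lies in their span. That span has \(q^i\) elements, so the conditional probability of dependence is \(q^{i-n}\). Multiplying the conditional probabilities of independence gives
\[
\Pr[F]=\prod_{i=0}^{b-1}(1-q^{i-n}).
\]
For the bound, apply the union bound \(1-\prod(1-x_i)\le\sum x_i\), which follows by induction or from Weierstrass's product inequality. Summing the geometric series gives
\[
\sum_{i=0}^{b-1}q^{i-n}=\frac{q^b-1}{(q-1)q^n}<\frac{q^{b-n}}{q-1}.
\]

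\textbf{The equality case \(0<\tau<1\).} Here one must show that every rank-deficient matrix is unbalanced. Such a matrix has a proper image, so some \(y\in\mathbb F_q^b\) has an empty preimage, and then \(|S\cap A^{-1}(y)|=0\). Since \(S\) is nonempty, \(\mu>0\), so \(0<(1-\tau)\mu\) and the lower inequality in \labelcref{eq:I1} fails. Thus the conditional probability of being unbalanced on \(F^c\) equals one, and the first inequality in \labelcref{eq:H8} becomes an equality.

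No step is a genuine obstacle; the only point needing care is the identification of the conditional law on \(F\) with the uniform surjection.
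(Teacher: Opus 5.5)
Your proposal is correct and follows the paper's proof. You use row exposure for the product formula, the inequality $1-\prod_i(1-x_i)\le\sum_i x_i$ for the bound, and an empty output fiber for the equality case when $0<\tau<1$. The only difference is that you write out the conditioning and total-probability step behind \labelcref{eq:H8}, which the paper leaves implicit.
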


\begin{proof}
Exposing the rows successively gives the displayed formula for full row rank. The bound \labelcref{eq:H8a} follows from \(1-\prod_i(1-x_i)\le\sum_i x_i\). If the rank is deficient, there is an empty output fiber; for \(0<\tau<1\), its discrepancy is greater than \(\tau\mu\). \end{proof}

\begin{proof}[Proof of the unrestricted-matrix extension in \cref{thm:master}]
Assume \(0<\tau\le1\), and let \(R\) denote the right-hand side of \labelcref{eq:H1}. Each expression in its minimum is at least one, so \(R\ge8/(\tau^2E)\). Since \(|S|\le q^n\), \cref{prop:rank-transfer} gives
\[
r_{n,b,q}<\frac1{(q-1)q^aE}
\le\frac1{\tau^2E}\le\frac R8.
\]
Applying \cref{prop:rank-transfer} once more, we obtain
\[
\Pr[A\text{ is }\tau\text{-unbalanced}]
\le R+r_{n,b,q}\le\frac98R,
\]
which replaces the leading constant \(8\) by \(9\). \end{proof}

\subsection{Quantitative comparisons}
\label{sec:quantitative-comparisons}

\textbf{Comparison with Doron et al.} For \(h\ge1\), \(\lambda=h-b>0\), and \(0<\tau\le1\), \cref{cor:fixed-fields} implies
\[
\beta_S(b,\tau)\le
\min\left\{1,\frac{3072q^2h}{\tau^2}q^{-\lambda}\right\}.
\]
To see this, set \(a=1+\lceil\log_q(16h)\rceil\), so \(192q^a\le3072q^2h\). If \(\delta=3072q^2h\,\tau^{-2}q^{-\lambda}<1\), the output dimension prescribed by \cref{cor:fixed-fields} is at least \(b\). Composing with a fixed surjection to \(\mathbb F_q^b\) preserves balance and the uniform distribution on surjections. If \(\delta\ge1\), the bound is trivial.

Now let \(G\) be a uniform \(n\times(n-b)\) matrix and \(C=\operatorname{im}G\), as in \cite[Section 2.1]{DLMNRR}. Conditional on full rank, the preceding bound applies to the relative errors in all counts \(|S\cap(z+C)|\), normalized by \(|C|\,|S|/q^n\). The rank-deficiency probability is less than \(q^{-b}/(q-1)\). Thus, at discrepancy \(q^{-\lambda/6}\), the failure probability is at most
\[
3072q^2h\,q^{-2\lambda/3}+\frac{q^{-b}}{q-1}.
\]
The corresponding bounds in \cite[Theorem 2.4]{DLMNRR} are discrepancy \(q^{-\lambda/12}\) and failure at most
\[
4q^{1-\lambda/3}
+\frac{q^{1-b}}{(q-1)^2}
+q^{2n-\lambda^2/1440},
\qquad \lambda\ge240\log_qn.
\]
Whenever this failure bound is less than one, \(\lambda>\sqrt{2880n}>6\) and \(n\ge2\), since \(\lambda\le h\le n\). Using \(h\le n\) and the displayed hypothesis on \(\lambda\), we obtain
\[
\frac{3072q^2h\,q^{-2\lambda/3}}{4q^{1-\lambda/3}}
=768qh\,q^{-\lambda/3}
\le768n\,q^{-\lambda/6}
\le768n^{-39}<1.
\]
Also \(q^{-b}/(q-1)<q^{1-b}/(q-1)^2\). Hence our bound gives both a smaller discrepancy and a smaller failure probability throughout the nonvacuous range of their indicator specialization. The case \(b=0\), outside our projection convention, follows directly: full rank gives \(C=\mathbb F_q^n\) and zero discrepancy, leaving only the same rank-deficiency bound.

\textbf{Optimizing the DD branch.}

For \(a=2\), fixed \(\tau,\delta\), and \(q\asymp b\), we verify the estimate stated in \cref{sec:master} using only the DD branch. Write \(d=b+2\), \(A=8(1+\tau)/\tau^2\), and \(\beta=\beta_S(b,\tau)\). The bound is
\[
\beta\le\frac8{\tau^2 E}
(1-A/E)^{-d}(1+1/q)^d.
\]
For \(E\ge2A\), the last two factors are at most
\[
\exp(2Ad/E+d/q).
\]
Choosing \(E=C_\tau d/\log d\), with \(C_\tau>4A\), bounds the failure probability by a constant times
\[
(\log d)d^{-1+2A/C_\tau},
\]
which tends to zero. Thus the DD branch suffices at mean \(O_{\tau,\delta}(q^2b/\log b)\).

\section{Proof of the fixed-field lower bound}
\label{sec:lower-proof}

We prove \cref{thm:lower-bound} by choosing a source of prescribed cardinality and taking a union bound over all kernels. This is a refinement of the random-source methods of \cite[Proposition 2.2]{ADMPT} and \cite[Appendix A]{KM}. The proof keeps track of the positive-discrepancy rate and the ambient dimension.

\begin{lemma}[exact-cardinality random sources]
\label{lem:random-sources}
Let \(1\le b<n\), set
\[
B=q^b,\qquad F=q^{n-b},\qquad N=BF,
\]
and let \(1\le m<N\) be an integer. Put \(\mu=m/B\), fix \(\tau>0\), and define
\begin{equation}
k=\lfloor(1+\tau)\mu\rfloor+1,
\qquad
p_+=\sum_{j\ge k}
\frac{\binom Fj\binom{N-F}{m-j}}{\binom Nm}.
\label{eq:L1}
\end{equation}
There is an \(m\)-element set for which the balanced fraction among surjections is at most \(e^{-Bp_+}\). Moreover, if
\begin{equation}
{n\brack b}_q e^{-Bp_+}<1,
\label{eq:L2}
\end{equation}
then some \(m\)-element set has a fiber larger than \((1+\tau)\mu\) under every surjection to \(\mathbb F_q^b\).
\end{lemma}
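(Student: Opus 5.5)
Take \(S\) to be a uniformly random \(m\)-element subset of \(\mathbb F_q^n\). Then average the number of balanced surjections over this choice, and finish with the first-moment method.

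\textbf{Step 1: reduce to kernels.} A surjection is balanced exactly when its kernel is \(\tau\)-shift-balanced, and a uniform surjection has a uniform kernel in \(\mathcal G_{n-b}(n)\). So the balanced fraction among surjections equals the fraction of \((n-b)\)-directions \(U\) that are shift-balanced. It therefore suffices to show that, for each fixed direction \(U\),
\[
\Pr_S[U\text{ is }\tau\text{-shift-balanced}]\le e^{-Bp_+}.
\]
Granting this, linearity of expectation shows that the expected balanced fraction is at most \(e^{-Bp_+}\). Hence some \(m\)-set attains at most this fraction, which is the first assertion.

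\textbf{Step 2: one direction.} Fix \(U\). Its \(B\) translates \(C_1,\dots,C_B\) partition \(\mathbb F_q^n\) into cells of size \(F\). Let \(X_i=|S\cap C_i|\). The vector \((X_1,\dots,X_B)\) is multivariate hypergeometric, and each marginal has
\[
\Pr[X_i\ge k]=p_+,
\]
where \(k\) is as in \labelcref{eq:L1}. The value \(k\) is the least integer exceeding \((1+\tau)\mu\), so \(X_i\ge k\) means the \(i\)-th fiber violates the upper inequality in \labelcref{eq:I1}. Balance therefore requires the event \(\bigcap_i\{X_i<k\}\), and I need
\[
\Pr\Bigl[\bigcap_i\{X_i<k\}\Bigr]\le(1-p_+)^B\le e^{-Bp_+}.
\]
This is the step I expect to be the main obstacle: it needs negative dependence of the cell counts under sampling without replacement. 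Specifically, the indicators \(\mathbf 1\{X_i<k\}\), which are decreasing functions of disjoint coordinate blocks, should be negatively associated. Multivariate hypergeometric counts are negatively associated (Joag-Dev and Proschan, via the fact that permutation distributions are negatively associated). Decreasing functions of disjoint subsets of negatively associated variables satisfy
\[
\Pr\Bigl[\bigcap_i\{X_i<k\}\Bigr]\le\prod_i\Pr[X_i<k].
\]

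If I prefer a self-contained argument, I would argue sequentially instead. Conditional on \(X_1<k,\dots,X_{j-1}<k\), the remaining sample is larger, namely \(m-\sum_{i<j}X_i\), and is drawn uniformly from the remaining cells. Hence \(X_j\) given these events is stochastically at least a hypergeometric with the original parameters. The justification is that increasing the number of draws while removing from the population only cells that received few points can only raise the tail of \(X_j\). Formalizing this requires a monotone coupling; I would take the negative-association citation as the main route.

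\textbf{Step 3: union bound.} For the second assertion, the expected number of balanced directions is \({n\brack b}_q\Pr[U\text{ balanced}]\le{n\brack b}_q e^{-Bp_+}\), which is less than \(1\) under \labelcref{eq:L2}. Hence some \(m\)-set has no shift-balanced \((n-b)\)-direction. Moreover, the failure events in Step 2 were exactly fibers of size at least \(k>(1+\tau)\mu\). Since only the upper tail was used, every kernel then has a translate with more than \((1+\tau)\mu\) points of \(S\). Equivalently, every surjection to \(\mathbb F_q^b\) has a fiber larger than \((1+\tau)\mu\).
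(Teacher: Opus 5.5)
Your proof is correct and follows the same overall plan as the paper: a uniformly random \(m\)-subset, the per-kernel bound \(\Pr[\bigcap_i\{X_i\le k-1\}]\le(1-p_+)^B\), averaging over kernels, and a union bound over kernels. In that union bound you count directions satisfying the upper-tail event \(\bigcap_i\{X_i\le k-1\}\), not merely the balanced ones, which is what your remark about using only the upper tail amounts to.

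\textbf{The central inequality.} The genuine difference is how you justify it.

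\begin{itemize}
\item You cite negative association of the multivariate hypergeometric vector (Joag-Dev and Proschan). The product bound for the decreasing events \(\{X_i\le k-1\}\) is then a standard one-line consequence.
\item The paper proves exactly this consequence from scratch. For \(i\in J\) it conditions on the single count \(X_i=x\). The rest of the source is then a uniform \((m-x)\)-subset of the complement of the \(i\)-th cell. Coupling through a random permutation shows that the probability \(g(x)\) of the decreasing event \(\{X_j\le k-1,\ j\in J\setminus\{i\}\}\) is nondecreasing in \(x\). The one-variable inequality \(\operatorname{Cov}(f(X_i),g(X_i))\le0\), with \(f=\mathbf 1_{\{x\le k-1\}}\), then peels off one factor \(1-p_+\), and induction on \(|J|\) finishes.
\end{itemize}

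Your route is shorter given the citation. The paper's is self-contained and uses only the monotonicity actually needed.

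\textbf{Your fallback argument.} The sequential argument does not work as you justify it. Conditioning on specific values \(X_i=x_i<k\) for earlier cells need not make a later count stochastically larger, because \(k-1=\lfloor(1+\tau)\mu\rfloor\) can exceed the mean \(\mu\). If every earlier cell received about \((1+\tau)\mu\) points, the remaining sampling fraction \((m-\sum_{i<j}x_i)/(N-(j-1)F)\) is below \(m/N\). The claimed domination holds only after averaging over the conditioning event, which is precisely what negative association supplies. The paper's device of conditioning on the cell being removed, rather than on the cells already seen, is what makes a monotone coupling valid pointwise.
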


\begin{proof}
Choose \(S\) uniformly among all \(m\)-element subsets of \(\mathbb F_q^n\). For one fixed surjection let \(X_1,\ldots,X_B\) be the fiber counts. Each has the hypergeometric distribution in \labelcref{eq:L1}. We claim that
\begin{equation}
\Pr(X_i\le k-1\text{ for all }i)\le(1-p_+)^B.
\label{eq:L3}
\end{equation}
For an index set \(J\) and \(i\in J\), conditional on \(X_i=x\), the source outside the \(i\)-th fiber is a uniform \((m-x)\)-subset of its complement. The event \(X_j\le k-1\) for every \(j\in J\setminus\{i\}\) is decreasing under addition of source points. Coupling subsets of different sizes by initial segments of a uniform permutation shows that its conditional probability \(g(x)\) is nondecreasing in \(x\). Since \(f(x)=\mathbf1_{\{x\le k-1\}}\) is nonincreasing,
\[
\operatorname{Cov}(f(X_i),g(X_i))\le0.
\]
For example, this follows by expanding the expectation of \((f(X_i)-f(X_i'))(g(X_i)-g(X_i'))\le0\), with \(X_i'\) an independent copy. Consequently the probability of all the inequalities indexed by \(J\) is at most \((1-p_+)\) times the probability of those indexed by \(J\setminus\{i\}\). Induction proves \labelcref{eq:L3}.

Balance implies the event in \labelcref{eq:L3}. Averaging over the kernels proves the first assertion. Balance depends only on the kernel, and there are \({n\brack b}_q\) kernels. The union bound and \labelcref{eq:L2} prove the second assertion. \end{proof}

For \(\tau>0\), write
\begin{equation}
I_+(\tau)=(1+\tau)\ln(1+\tau)-\tau.
\label{eq:L4}
\end{equation}

\begin{lemma}[a sufficient reverse-tail estimate]
\label{lem:reverse-tail}
Fix \(0<\tau\le1\), with \(I_+(\tau)>0\) as in \labelcref{eq:L4}.
There is an absolute constant \(c>0\) such that, whenever
\begin{equation}
\mu\ge1,\qquad F\ge32\mu^2,\qquad B\ge4F,
\label{eq:L5}
\end{equation}
the probability in \labelcref{eq:L1} satisfies
\begin{equation}
p_+\ge c\mu^{-1/2}\exp(-\mu I_+(\tau)).
\label{eq:L6}
\end{equation}
\end{lemma}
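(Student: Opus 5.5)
We bound the hypergeometric tail below by its single term at \(j=k\), and compare that term with the corresponding Poisson probability. Write \(H(j)=\binom Fj\binom{N-F}{m-j}/\binom Nm\). Since \(p_+\ge H(k)\), it suffices to show
\[
H(k)\ge c\,\mu^{-1/2}\exp(-\mu I_+(\tau)).
\]
The plan has two steps. First, compare \(H(k)\) with the Poisson mass \(e^{-\mu}\mu^k/k!\), losing at most a constant factor under \labelcref{eq:L5}. Second, estimate that Poisson mass by Stirling's formula.

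\textbf{Step 1 (hypergeometric versus Poisson).} Write the term exactly as
\[
H(k)=\binom mk\frac{(F)_k\,(N-F)_{m-k}}{(N)_m}.
\]
Expand \(\binom mk=m^k/k!\cdot\prod_{i<k}(1-i/m)\) and \((F)_k=F^k\prod_{i<k}(1-i/F)\). Then use \((F/N)^k=B^{-k}\) and \(m/B=\mu\). The remaining ratio is
\[
\frac{(N-F)_{m-k}}{(N)_m}\,N^{k}.
\]
Bound this ratio below by \((1-F/N)^{m}\) up to factors of the form \(\prod(1-i/(N-m))\). We have \(mF/N=\mu\), and \(-\ln(1-x)\le x+x^2\) for small \(x\). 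This gives \((1-1/B)^{m}\ge e^{-\mu-\mu/B}\).

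The correction products are each at least \(1-O(k^2/F)\) or \(1-O(k^2/m)\). Note that \(k\le2\mu+1\le3\mu\), and \(F\ge32\mu^2\) makes \(k^2/F\le 9/32\). Also \(m=\mu B\ge4\mu F\ge128\mu^3\), so \(k^2/m\) is tiny. The terms from the \(N\)-ratio involve \(m^2/N\) and \(k\cdot m/N\). Here \(m/N=\mu/F\le1/(32\mu)\), but \(m^2/N=\mu m/F=\mu^2 B/F\) could be large. So this ratio must be handled carefully, as \(\prod_{i<m}\) of \((N-F-i)/(N-i)\) against \((1-F/N)\), and not by crude Taylor bounds.

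\textbf{The main obstacle.} The key is to pair factors so that errors are relative to \(F/N\). Write
\[
\frac{N-F-i}{N-i}=1-\frac{F}{N-i},
\qquad
\frac{F}{N-i}\le\frac FN\Bigl(1+\frac{2i}{N}\Bigr)\quad\text{for } i\le m\le N/2.
\]
Summing over \(i<m\), the extra loss is \(O(Fm^2/N^2)=O(\mu^2/F)=O(1)\), using \(B\ge4F\ge4\) to get \(m\le N/4\). Thus every correction costs at most a constant factor, and
\[
H(k)\ge c_1 e^{-\mu}\mu^k/k!.
\]
Note that \(N-F\ge m\) holds, since \(m\le N/4\).

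\textbf{Step 2 (Stirling).} Put \(k=(1+\tau)\mu+\theta\) with \(0<\theta\le1\). Stirling gives
\[
\frac{e^{-\mu}\mu^k}{k!}\ge\frac{c_2}{\sqrt k}\exp\bigl(-\mu+k-k\ln(k/\mu)\bigr).
\]
The function \(g(x)=x-x\ln(x/\mu)\) has derivative \(-\ln(x/\mu)\), which is at least \(-\ln(1+\tau+1/\mu)\ge-\ln3\) on the relevant range. Moving from \((1+\tau)\mu\) to \(k\) therefore costs at most a factor \(e^{-\ln3}\). At \(x=(1+\tau)\mu\), the exponent equals \(-\mu I_+(\tau)\). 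Finally \(\sqrt k\le\sqrt{3\mu}\), which gives \labelcref{eq:L6} with an absolute constant \(c\).
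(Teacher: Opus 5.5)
Your proof is correct and follows essentially the paper's route: you lower-bound $p_+$ by the single term at $k$, compare that term with the Poisson mass $e^{-\mu}\mu^k/k!$ up to an absolute constant, and finish with Stirling plus the bound $\ln(x/\mu)\le\ln3$ on the derivative of the rate function. The only difference is in the comparison step: the paper uses the symmetric form $\binom mj\binom{N-m}{F-j}/\binom NF$ and passes through $\operatorname{Bin}(F,\mu/F)$, so all its products have length at most $F$ and it never needs your pairing of the length-$m$ product (one small slip there: $m\le N/32$ follows from $m/N=\mu/F$ and $F\ge32\mu^2$, not from $B\ge4F$).
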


\begin{proof}
We need only bound the single point probability at \(k\). Let \(X\) have the hypergeometric distribution in \labelcref{eq:L1}, let \(Y\sim\operatorname{Bin}(F,\mu/F)\), and let \(Z\sim\operatorname{Pois}(\mu)\). The hypotheses ensure that every integer \(0\le j\le4\mu\) lies in the support of \(X\) and \(Y\). On this range, the exact probability ratios are
\[
\frac{\Pr(X=j)}{\Pr(Y=j)}
=\frac{(m)_j}{m^j}
\frac{(N-m)_{F-j}}{(N-m)^{F-j}}
\left(\frac{(N)_F}{N^F}\right)^{-1},
\]
\begin{equation}
\frac{\Pr(Y=j)}{\Pr(Z=j)}
=\frac{(F)_j}{F^j}(1-\mu/F)^{F-j}e^\mu,
\label{eq:L7}
\end{equation}
where \((u)_v=u(u-1)\cdots(u-v+1)\). The inequality
\begin{equation}
\log\frac{(u)_v}{u^v}
\ge-\frac{v(v-1)}{2(u-v+1)}
\label{eq:L8}
\end{equation}
follows by summing \(\log(1-x)\ge-x/(1-x)\). In the first ratio, discard the last factor, which is at least one. By \labelcref{eq:L5}, the magnitudes of the two resulting negative logarithmic bounds are at most \(2/31\) and \(2/15\). Thus \(\Pr(X=j)\ge e^{-1}\Pr(Y=j)\).

For the second ratio use \labelcref{eq:L8} and
\[
\log(1-u)\ge-u-\frac{u^2}{2(1-u)}.
\]
Its logarithm is at least
\[
-\frac{j^2}{2(F-j+1)}
-\frac{\mu^2}{2F(1-\mu/F)}>-1,
\]
since the two negative magnitudes are at most \(2/7\) and \(1/62\). For example, \(F-j+1\ge32\mu^2-4\mu+1\ge28\mu^2\), so \(j^2/[2(F-j+1)]\le16\mu^2/(56\mu^2)=2/7\). Hence
\begin{equation}
\Pr(X=j)\ge e^{-2}\Pr(Z=j)
\qquad(0\le j\le4\mu).
\label{eq:L9}
\end{equation}
Here \(k\le3\mu\). Stirling's inequality \(k!\le3\sqrt{k}(k/e)^k\) and the derivative \(\ln(x/\mu)\) of the Poisson rate \(x\ln(x/\mu)-x+\mu\) imply
\[
\Pr(Z=k)\ge
\frac{1}{9\sqrt{3\mu}}e^{-\mu I_+(\tau)}.
\]
Indeed rounding \((1+\tau)\mu\) up by at most one increases this rate by at most \(\ln3\). Equation \labelcref{eq:L9} proves \labelcref{eq:L6}, for example with \(c=e^{-2}/(9\sqrt3)\). \end{proof}

\begin{proof}[Proof of \cref{thm:lower-bound}]
Put \(m=q^h\), \(B=q^b\), and \(\mu=q^\ell\). Choose \(F=q^r\) to be the smallest power of \(q\) at least \(32\mu^2\), and set \(n=b+r\). Then \(\mu=\Theta_{q,\tau,\eta}(h)\),
\[
r=2\ell+O_q(1),\qquad n=h+\ell+O_q(1),
\]
and \labelcref{eq:L5} holds for sufficiently large \(h\). Furthermore,
\[
\mu I_+(\tau)\le(1-\eta)h\ln q
\le(1-\eta/2)b\ln q
\]
eventually, because \(h-b=O(\log h)\). \cref{lem:reverse-tail} therefore gives
\begin{equation}
Bp_+\ge c\mu^{-1/2}B^{\eta/2}.
\label{eq:L12}
\end{equation}
The elementary Gaussian-binomial bound
\[
{n\brack b}_q\le4q^{b(n-b)}
\]
follows from the product formula and \(\prod_{j\ge1}(1-2^{-j})>1/4\). Thus \(\ln{n\brack b}_q=O_{q,\tau,\eta}(h\log h)\), while the right side of \labelcref{eq:L12} grows exponentially in \(h\). Condition \labelcref{eq:L2} holds, so some source defeats every surjection. Nonsurjective maps have an empty fiber and hence are unbalanced because \(\tau<1\).

Finally, if a map to \(b'>b\) symbols were balanced, composing it with a surjective linear map to \(b\) symbols would remain balanced: each new fiber is the disjoint union of \(q^{b'-b}\) old fibers, and both the target mean and the error bound scale by that factor. This is impossible. Formula \labelcref{eq:L10} gives \labelcref{eq:L11}. \end{proof}

\section{Proof of the Kakeya obstruction}
\label{sec:kakeya-obstruction}

A rank-\(r\) Kakeya set \(K\subseteq\mathbb F_p^d\) contains a translate of every \(r\)-dimensional linear subspace. We use the size bound in \cite[Theorem 4]{KLSS} directly. Its complement yields the following hashing obstruction.

\begin{proposition}[a rank-four Kakeya obstruction]
\label{prop:rank-four-obstruction}
Let \(p\ge5\) be prime and \(d\ge5\). There is a nonempty \(A\subseteq\mathbb F_p^d\) such that every linear map to \(\mathbb F_p^{d-4}\) has an empty fiber and
\begin{equation}
\frac{|A|}{p^{d-4}}
=p^{4-d}\left\lfloor
p^d\left[
1-\left(1-\frac{p-3}{2p^4}\right)^{\lfloor d/5\rfloor}
\right]\right\rfloor.
\label{eq:L14}
\end{equation}
If \(d\to\infty\), \(p\to\infty\), and \(d=o(p^3)\), then this mean is
\begin{equation}
(1+o(1))\frac{dp}{10}.
\label{eq:L15}
\end{equation}
\end{proposition}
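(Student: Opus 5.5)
Take \(K\subseteq\mathbb F_p^d\) to be the rank-four Kakeya set of \cite[Theorem 4]{KLSS}, with
\[
|K|\le p^d\Bigl(1-\frac{p-3}{2p^4}\Bigr)^{\lfloor d/5\rfloor},
\]
and let \(A\) be the complement \(\mathbb F_p^d\setminus K\). The empty-fiber property comes directly from the Kakeya property. A linear map \(L:\mathbb F_p^d\to\mathbb F_p^{d-4}\) either fails to be surjective, in which case some fiber is empty trivially, or has a kernel \(U\) of dimension exactly four. In the latter case \(K\) contains a translate \(x+U\), which is a full fiber of \(L\), so that fiber misses \(A\). This property passes to every subset of \(A\). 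We may therefore shrink \(A\) to any prescribed cardinality no larger than \(p^d-|K|\), and in particular to exactly
\[
\Bigl\lfloor p^d\Bigl[1-\bigl(1-\tfrac{p-3}{2p^4}\bigr)^{\lfloor d/5\rfloor}\Bigr]\Bigr\rfloor,
\]
which is an integer at most \(|A|\). Dividing by \(p^{d-4}\) gives \labelcref{eq:L14}.

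Nonemptiness needs \(\lfloor d/5\rfloor\ge1\), which holds since \(d\ge5\), and \(p\ge5\) makes \(x:=(p-3)/(2p^4)>0\). Then
\[
p^d\bigl[1-(1-x)^{\lfloor d/5\rfloor}\bigr]\ge p^d x\ge p^{d-4}\ge1,
\]
so the floor is positive.

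For the asymptotic \labelcref{eq:L15}, set \(t=\lfloor d/5\rfloor\). Since \(d=o(p^3)\), we have \(tx=O(d/p^3)\to0\), and therefore
\[
1-(1-x)^t=tx\,(1+O(tx))=(1+o(1))\,tx.
\]
This gives
\[
p^4\bigl[1-(1-x)^t\bigr]=(1+o(1))\,t\,\frac{p-3}{2}=(1+o(1))\,\frac{d}{5}\cdot\frac{p}{2},
\]
using \(t=(1+o(1))d/5\) as \(d\to\infty\) and \(p-3=(1+o(1))p\). The floor changes the quantity inside the brackets by less than one. After multiplying by \(p^{4-d}\), that is an additive error below \(p^{4-d}\), which is \(o(1)\) relative to \(dp/10\).

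The only delicate point is making the approximation \(1-(1-x)^t\approx tx\) uniform. I would use the elementary bounds
\[
tx-\tbinom t2x^2\le 1-(1-x)^t\le tx,
\]
which hold for \(0\le x\le1\). The hypothesis \(d=o(p^3)\) is exactly what makes the correction \(\binom t2x^2/(tx)\le tx/2\) tend to zero.
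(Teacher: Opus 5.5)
Your proof is correct and follows the paper's argument. You take the rank-four Kakeya set from \cite[Theorem 4]{KLSS} and pass to a subset of its complement of the floor cardinality, using that every surjection has a fiber (a translate of its four-dimensional kernel) contained in $K$; you then expand $1-(1-u)^t=(1+o(1))tu$ using $tu=O(d/p^3)=o(1)$. Your check that the floor is positive and your two-sided Bonferroni bounds only make explicit steps the paper leaves implicit.
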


\begin{proof}
Theorem 4 of \cite{KLSS}, with ambient dimension \(d\), rank \(4\), and their \(\delta_p=3\), supplies a rank-four Kakeya set \(K\) satisfying
\begin{equation}
|K|\le
p^d\left(1-\frac{p-3}{2p^4}\right)^{\lfloor d/5\rfloor}.
\label{eq:L16}
\end{equation}
Choose \(A\subseteq\mathbb F_p^d\setminus K\) with cardinality equal to the floor in \labelcref{eq:L14}. Every surjection to \(\mathbb F_p^{d-4}\) has a fiber contained in \(K\), so that fiber misses \(A\). Nonsurjections also have an empty fiber. Finally, put \(t=\lfloor d/5\rfloor\) and \(u=(p-3)/(2p^4)\). Under the asymptotic hypotheses, \(tu=o(1)\), and \(1-(1-u)^t=(1+o(1))tu\). The floor changes the mean by less than \(p^{4-d}\), proving \labelcref{eq:L15}. \end{proof}

\begin{proposition}[prescribing the kernel dimension]
\label{prop:kernel-dimension}
Let \(p\ge5\) be prime, \(n\ge r+1\), \(4\le r<n\), put \(d=n-r+4\), and suppose \(d\ge5\). There is \(S\subseteq\mathbb F_p^n\) such that every linear map to \(\mathbb F_p^{n-r}\) has an empty fiber and its mean is exactly the right side of \labelcref{eq:L14}, with this value of \(d\). For every positive integer \(m\le |S|\), we can choose an \(m\)-element subset of \(S\) having an empty fiber under every such map.
\end{proposition}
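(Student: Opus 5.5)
We take the set \(A\subseteq\mathbb F_p^d\) from \cref{prop:rank-four-obstruction} and embed it by adding \(r-4\) zero coordinates. Concretely, write \(\mathbb F_p^n=\mathbb F_p^d\times\mathbb F_p^{r-4}\); this is possible because \(d+(r-4)=n\). Set \(S=A\times\{0\}\). Then \(|S|=|A|\), and the mean for maps to \(\mathbb F_p^{n-r}=\mathbb F_p^{d-4}\) is \(|S|/p^{n-r}=|A|/p^{d-4}\), which is exactly the right side of \labelcref{eq:L14}. The hypothesis \(d\ge5\) is exactly what \cref{prop:rank-four-obstruction} requires. If \(r=4\), then \(S=A\) and nothing more needs to be checked.

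For the empty-fiber property, let \(L:\mathbb F_p^n\to\mathbb F_p^{d-4}\) be linear, and let \(L'\) be its restriction to the subspace \(\mathbb F_p^d\times\{0\}\cong\mathbb F_p^d\). This restriction is a linear map \(\mathbb F_p^d\to\mathbb F_p^{d-4}\). By \cref{prop:rank-four-obstruction}, some \(y\) has \(A\cap L'^{-1}(y)=\emptyset\). Since \(S\) lies in \(\mathbb F_p^d\times\{0\}\), we have \(S\cap L^{-1}(y)=(A\cap L'^{-1}(y))\times\{0\}=\emptyset\). So every linear map to \(\mathbb F_p^{n-r}\) has an empty fiber on \(S\).

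The last claim, about subsets, is monotonicity: if \(S'\subseteq S\), then \(S'\cap L^{-1}(y)\subseteq S\cap L^{-1}(y)=\emptyset\). Therefore any \(m\)-element subset of \(S\), for \(m\le|S|\), inherits an empty fiber under every map.

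The only point requiring care is that the empty fiber must be found for every linear map on the larger space, not only for maps that factor through the first \(d\) coordinates. The restriction argument above handles this, including the case where \(L'\) is not surjective: then \(L'\) misses some \(y\) entirely, and that fiber is again empty. I do not expect a genuine obstacle. The one thing to confirm is that the dimension bookkeeping \(n=d+r-4\) matches the hypotheses \(4\le r<n\) and \(d\ge5\), and it does.
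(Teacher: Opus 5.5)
Your proposal is correct and matches the paper's proof essentially step for step. Both embed \(A\) as \(A\times\{0\}^{r-4}\) and restrict an arbitrary map to the first \(d\) coordinates, with a nonsurjective restriction omitting an output. Both note that the mean is preserved because \(n-r=d-4\), and both obtain the subset statement from monotonicity of empty fibers.
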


\begin{proof}
Construct \(A\subseteq\mathbb F_p^d\) by \cref{prop:rank-four-obstruction} and set
\begin{equation}
S=A\times\{0\}^{r-4}\subseteq\mathbb F_p^n.
\label{eq:L17}
\end{equation}
Restrict any map \(\mathbb F_p^n\to\mathbb F_p^{n-r}=\mathbb F_p^{d-4}\) to the first \(d\) coordinates. If the restriction is surjective, \cref{prop:rank-four-obstruction} supplies an empty output fiber on \(A\); if it is not, an output outside its image is empty. The cardinality and output dimension have not changed, so the mean is preserved. Taking an arbitrary subset preserves all empty fibers and permits every smaller positive integer cardinality. \end{proof}

\begin{proof}[Proof of \cref{cor:quadratic-obstruction}]
Along \labelcref{eq:L18}, we have \(r\ge4\), \(r=o(n)\), and \(d=n-r+4\sim n\) for all sufficiently large parameters. Apply \cref{prop:kernel-dimension}. Since \(d=o(p^3)\), the asymptotic estimate \labelcref{eq:L15} gives
\[
\mu=(1+o(1))np/10.
\]
Moreover \(r\ln p=cn+O(\ln p)\) and \(\ln\mu=O(\ln p)\), so
\[
|S|p^{-n}=\mu p^{-r}=\exp(-cn+O(\ln p)).
\]
\cref{prop:kernel-dimension} gives every smaller positive cardinality by taking subsets. An empty fiber violates balance for every \(\tau<1\), regardless of the allowed failure probability \(\delta<1\). Thus a threshold \(o(n^2)\) cannot suffice. \end{proof}

\section{Proof of the smooth-covering theorem}
\label{sec:covering-proof}

We follow the construction of Ordentlich, Regev, and Weiss \cite[Theorem 1.2]{ORW}. They begin with a Haar--Siegel random lattice whose packing and covering dilation factors for the body are comparable. After rescaling, they obtain a coarse lattice that packs a slightly expanded copy of the body and a finer lattice that serves as a grid. They discretize this expanded copy and a slightly contracted copy on the grid. The packing condition makes reduction modulo the coarse lattice injective on both copies, so their grid points become two subsets of a finite vector space. A random subspace determines an intermediate lattice, and its coset intersections with these two sets count lattice points in grid translates. Two-sided hashing controls these counts; a convexity argument then controls every real translate of the original body. Finally, the normalized intermediate lattice again has the Haar--Siegel distribution, which turns this construction into a statement about a random unimodular lattice.

Our main change is to replace the hashing input in the proof of \cite[Theorem 3.4]{ORW} by \cref{cor:linear-field-cutoff}. At fixed discrepancy and failure probability, this reduces the sufficient mean fiber size from order \(p^3\) to order \(p^2\). We choose \(p\asymp_\varepsilon n\) and arrange that the mean fiber sizes are comparable to \(\operatorname{vol}(K)\), giving the quadratic volume threshold. We also use a fixed cutoff for the ratio of covering to packing radii, absorb its exponentially small failure probability into the dimension cutoff, and handle very large volumes by direct lattice-point counting. We use the following forms of their geometric and probabilistic results.

We use the notation \(N(\Lambda,K,x)\), \(\eta(K,\Lambda)\), and \(\mu_n\) from \cref{sec:smooth-coverings}. As in \cite{ORW}, let \(r_{\mathrm{pack},K}(L)\) and \(r_{\mathrm{cov},K}(L)\) be the supremum of packing dilation factors and the infimum of covering dilation factors, respectively, for \(K\) with lattice \(L\), and write
\begin{equation}
\rho_K(L)=\frac{r_{\mathrm{cov},K}(L)}{r_{\mathrm{pack},K}(L)}.
\label{eq:C5}
\end{equation}

\textbf{Radius-ratio estimate (\cite[Proposition 3.6]{ORW}).} For every compact convex body \(K\subseteq\mathbb R^n\) with nonempty interior and every \(\alpha>0\),
\[
\Pr_{L\sim\mu_n}\bigl(\rho_K(L)\ge2\alpha^2\bigr)
<3\left(\frac2\alpha\right)^n.
\]
In particular, \(\alpha=4\) gives a fixed radius-ratio cutoff of \(32\) with failure probability less than \(3\cdot2^{-n}\).

\textbf{Discretization (\cite[Proposition 3.1 and Lemma 3.2]{ORW}).} Let \(G\subseteq\mathbb R^n\) be a lattice and let \(0<s<1\). If \(G+sK=\mathbb R^n\), then every \(x\in\mathbb R^n\) admits \(y_-,y_+\in G\) such that
\[
(1-s)K+y_-\subseteq K+x\subseteq(1+s)K+y_+.
\]
Consequently, for any lattice \(L\), lower bounds on \(N(L,(1-s)K,y)\) and upper bounds on \(N(L,(1+s)K,y)\) that hold for every \(y\in G\) also bound \(N(L,K,x)\) for every \(x\in\mathbb R^n\). We use the proposition in this lattice case; its source statement allows any discrete set \(G=-G\).

\textbf{Lattice-point counting (\cite[Lemma 3.3]{ORW}).} Let \(G\subseteq\mathbb R^n\) be a lattice and \(D\subseteq\mathbb R^n\) a compact convex body with nonempty interior. If \(G+\beta D=\mathbb R^n\) for some \(0<\beta<1\), then
\[
(1-\beta)^n\frac{\operatorname{vol}(D)}{\det G}
\le |G\cap D|
\le(1+\beta)^n\frac{\operatorname{vol}(D)}{\det G}.
\]
Neither this estimate nor the discretization statement requires symmetry or a choice of origin inside the body.

\textbf{Random-neighbor invariance (\cite[Proposition 2.1]{ORW22}).} Fix a prime \(p\) and an integer \(1\le r\le n\). Sample \(L\sim\mu_n\), and then choose \(L'\) uniformly among the lattices satisfying \(L\subseteq L'\subseteq p^{-1}L\) and \([L':L]=p^r\). The normalized lattice \(p^{r/n}L'\) has distribution \(\mu_n\). Equivalently, \(L'/L\) is a uniform \(r\)-dimensional subspace of \(p^{-1}L/L\cong\mathbb F_p^n\). This is the invariance statement used in the proof of \cite[Theorem 1.2]{ORW}.

\begin{proof}[Proof of \cref{thm:smooth-covering}]
We prove the theorem with \(C=2^{52}\). Write \(V=\operatorname{vol}(K)\) and \(R=32\), and choose \(n_0\ge5\) as in the theorem. Since \(2^n/n\) increases for integers \(n\ge2\), for all \(n\ge n_0\) we have
\begin{equation}
\left(2^{19}n/\varepsilon\right)^{1/n}\le2,
\qquad 3\cdot2^{-n}<\Delta/2.
\label{eq:C11}
\end{equation}
By Bertrand's postulate choose a prime \(p\) satisfying
\begin{equation}
2^{18}n/\varepsilon\le p<2^{19}n/\varepsilon,
\qquad \tau=\varepsilon/2,
\qquad \delta=\Delta/4.
\label{eq:C12}
\end{equation}
Our volume hypothesis implies
\begin{equation}
V\ge\frac{6144p^2}{\varepsilon^2\Delta},
\label{eq:C25}
\end{equation}
since \(6144\cdot2^{38}=3\cdot2^{49}<2^{52}\). Define
\begin{equation}
r_0=\left\lceil\log_pV+n\log_p(4R)\right\rceil,
\quad r=\min\{r_0,n\},
\quad c=4R^2p^{1/n},
\quad s=c/p.
\label{eq:C13}
\end{equation}
These choices are deterministic, independent of the lattice to be sampled, and give
\begin{equation}
c\le8192,\qquad s\le\frac{\varepsilon}{32n}.
\label{eq:C15}
\end{equation}

Sample \(L_0\sim\mu_n\), put \(\Gamma=p^{r/n}L_0\) and \(L_f=p^{-1}\Gamma\), and let \(\pi_\Gamma:L_f\to L_f/\Gamma\cong\mathbb F_p^n\) be the quotient map. For every \(L_0\), sample a uniform \(r\)-dimensional subspace \(U\) of this quotient using independent randomness, and set \(\Lambda_U=\pi_\Gamma^{-1}(U)\). Thus \(\det\Lambda_U=1\), and \(\Lambda_U=L_0\) when \(r=n\). Proposition 3.6 of \cite{ORW}, with \(\alpha=4\), gives
\begin{equation}
\Pr\bigl(\rho_K(L_0)\ge R\bigr)<3\cdot2^{-n}<\Delta/2.
\label{eq:C9}
\end{equation}
Work first on the event \(\rho_K(L_0)<R\). The packing and covering volume inequalities imply
\begin{equation}
r_{\mathrm{pack},K}(L_0)\ge V^{-1/n}/R,
\qquad r_{\mathrm{cov},K}(L_0)\le RV^{-1/n}.
\label{eq:C16}
\end{equation}

Suppose \(r_0<n\), so \(r=r_0\), and put \(b=n-r_0\). Equations \labelcref{eq:C13} and \labelcref{eq:C16} give
\begin{equation}
\det\Gamma=p^{r_0},\qquad
r_{\mathrm{pack},K}(\Gamma)\ge4,\qquad
r_{\mathrm{cov},K}(\Gamma)<c.
\label{eq:C18}
\end{equation}
In particular, \(L_f+sK=\mathbb R^n\), and reduction modulo \(\Gamma\) is injective on each of \((1-s)K\) and \((1+s)K\).

Use the two discretized sources from the proof of \cite[Theorem 3.4]{ORW}:
\begin{equation}
A_\pm=\pi_\Gamma\bigl(L_f\cap((1\pm s)K)\bigr),
\qquad \mu_\pm=|A_\pm|/p^b.
\label{eq:C19}
\end{equation}
Since \(\det L_f=p^{-b}\) and \(L_f+sK=\mathbb R^n\), apply \cite[Lemma 3.3]{ORW} to \((1\pm s)K\) with \(\beta_\pm=s/(1\pm s)\). This recovers their equation (25):
\begin{equation}
(1-2s)^nV\le\mu_\pm\le(1+2s)^nV.
\label{eq:C20}
\end{equation}
Equations (28)--(32) in the same proof show that, whenever \(U\) is \(\tau\)-shift-balanced for both sources,
\begin{equation}
\eta(K,\Lambda_U)<\tau+8ns\le3\varepsilon/4<\varepsilon.
\label{eq:C23}
\end{equation}
These parts of their argument use only the packing, covering, and balance assumptions. We replace their subsequent application of the finite-field theorem as follows.

By \labelcref{eq:C15}, \((1-2s)^n\ge1-\varepsilon/16>1/2\), so \labelcref{eq:C20} and \labelcref{eq:C25} imply
\begin{equation}
\mu_\pm>V/2\ge\frac{3072p^2}{\varepsilon^2\Delta}
=\frac{192p^2}{\tau^2\delta}.
\label{eq:C26}
\end{equation}
Also \(V>p^2\), hence \(r_0\ge3\), and therefore \(b\ge1\) and \(b+2<n\). The field choice gives \(p\ge16(b+2)\). \cref{cor:linear-field-cutoff} now applies to both sources. A uniform subspace has the same distribution as the kernel of a uniform surjection, so the probability that \(U\) fails to be \(\tau\)-shift-balanced for either source is less than \(2\delta=\Delta/2\), conditional on each \(L_0\) with \(\rho_K(L_0)<R\).

If \(r_0\ge n\), then \(r=n\) and \(\Lambda_U=L_0\). The definition of \(r_0\) gives \(V>p^{n-1}/(4R)^n\), whence \labelcref{eq:C16} implies
\begin{equation}
r_{\mathrm{cov},K}(L_0)<4R^2p^{-1+1/n}=s.
\label{eq:C28}
\end{equation}
Apply \cite[Lemma 3.3]{ORW} to each translate of \(K\), with \(\beta=s\), to obtain
\[
(1-s)^n\le\frac{N(L_0,K,x)}V\le(1+s)^n
\qquad(x\in\mathbb R^n).
\]
Since \(s\le\varepsilon/(32n)\), these bounds give \(\eta(K,L_0)<\varepsilon\).

Finally, \(\Lambda_U\) in the first branch is a normalized random \(p\)-neighbor of \(L_0\), and the second branch is the same construction with \(r=n\). Since \(p,r\) were fixed before sampling \(L_0\), \cite[Proposition 2.1]{ORW22} implies that the resulting lattice has law \(\mu_n\), exactly as in the proof of \cite[Theorem 1.2]{ORW}. Combining \labelcref{eq:C9} with the conditional failure bound gives total failure probability less than
\begin{equation}
3\cdot2^{-n}+\Delta/2<\Delta.
\label{eq:C29}
\end{equation}
This proves \cref{thm:smooth-covering}. \end{proof}

\begin{corollary}[existence at quadratic density]
\label{cor:existence-density}
For each fixed \(0<\varepsilon<1\), every sufficiently high-dimensional convex body admits an \(\varepsilon\)-smooth lattice covering of density \(O(\varepsilon^{-4}n^2)\), with an absolute implicit constant and a dimension cutoff depending on \(\varepsilon\).
\end{corollary}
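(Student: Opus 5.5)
The plan is to apply \cref{thm:smooth-covering} with \(\Delta=1/2\) and then rescale. Given \(0<\varepsilon<1\), fix \(\Delta=1/2\) and let \(n_0=n_0(\varepsilon,1/2)\) be the cutoff provided by \cref{thm:smooth-covering}; it depends only on \(\varepsilon\). Now let \(n\ge n_0\) and let \(K\subseteq\mathbb R^n\) be any compact convex body with nonempty interior. Since \(\eta(K,\Lambda)\) is unchanged when \(K\) and \(\Lambda\) are scaled by the same factor, and scaling \(K\) alone changes its volume, we may replace \(K\) by the dilate \(tK\), with \(t>0\) chosen so that
\[
\operatorname{vol}(tK)=2C\varepsilon^{-4}n^2 .
\]
This volume satisfies the hypothesis \labelcref{eq:C2}, because \(C\varepsilon^{-4}\Delta^{-1}n^2=2C\varepsilon^{-4}n^2\).

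By \labelcref{eq:C3}, the Haar--Siegel probability that \(\eta(tK,\Lambda)\ge\varepsilon\) is less than \(1/2\). In particular, some unimodular lattice \(\Lambda\) has \(\eta(tK,\Lambda)<\varepsilon\). Set \(\Lambda'=t^{-1}\Lambda\). Counting points gives
\[
N(\Lambda',K,x)=N(\Lambda,tK,tx),
\]
and the densities agree as well:
\[
\frac{\operatorname{vol}(K)}{\det\Lambda'}=\frac{t^{-n}\operatorname{vol}(tK)}{t^{-n}}=\operatorname{vol}(tK).
\]
Hence \(\eta(K,\Lambda')=\eta(tK,\Lambda)<\varepsilon\). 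Thus the translates \(\lambda+K\), \(\lambda\in\Lambda'\), form an \(\varepsilon\)-smooth lattice covering of density \(2C\varepsilon^{-4}n^2=O(\varepsilon^{-4}n^2)\).

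The smoothness itself guarantees a genuine covering. Since \(\varepsilon<1\), every point \(x\) lies in at least \((1-\varepsilon)\operatorname{vol}(K)/\det\Lambda'>0\) translates, so every multiplicity is positive; here we use \(\Lambda'=-\Lambda'\), as remarked after \labelcref{eq:C1}. The implicit constant is \(2C=2^{53}\), which is absolute, and the dimension cutoff is \(n_0(\varepsilon,1/2)\), which depends only on \(\varepsilon\).

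There is no real obstacle. The only point needing care is the scaling invariance of \(\eta\), and in particular that the normalization in \labelcref{eq:C1} uses \(\det\Lambda'\). This is exactly why the density equals \(\operatorname{vol}(tK)\), which is the quantity fixed at \(2C\varepsilon^{-4}n^2\).
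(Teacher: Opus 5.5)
Your proposal is correct and follows the paper's proof exactly: apply \cref{thm:smooth-covering} with \(\Delta=1/2\) to a dilate of volume \(2C\varepsilon^{-4}n^2\), pick a unimodular lattice satisfying the conclusion, and rescale back. Your explicit scaling computation for \(\eta\) and your check that \(\varepsilon<1\) forces every multiplicity to be positive (using \(\Lambda'=-\Lambda'\)) are correct details that the paper leaves implicit.
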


\begin{proof}
Apply \cref{thm:smooth-covering} with \(\Delta=1/2\) to a dilate of the given body having volume \(2C\varepsilon^{-4}n^2\), and choose a unimodular lattice for which the conclusion holds. Rescaling the lattice back gives the claim. \end{proof}

\section*{LLM Use}

The author used OpenAI's GPT-6 Astra to assist with literature review, drafting and revising the exposition, and developing and checking proofs. The author takes full responsibility for the content of this paper, including the correctness of its mathematical arguments and the accuracy of its references.

\bibliographystyle{alpha}
\bibliography{two-sided-hashing-smooth-coverings}

@article{ADMPT,
  author = {Alon, Noga and Dietzfelbinger, Martin and Miltersen, Peter Bro and Petrank, Erez and Tardos, G{\'a}bor},
  title = {Linear Hash Functions},
  journal = {Journal of the ACM},
  volume = {46},
  number = {5},
  pages = {667--683},
  year = {1999},
  note = {\url{https://www.cs.tau.ac.il/~nogaa/PDFS/linhash13.pdf}}
}

@article{DD,
  author = {Dhar, Manik and Dvir, Zeev},
  title = {Linear Hashing with {$\ell_\infty$} guarantees and two-sided {Kakeya} bounds},
  journal = {TheoretiCS},
  volume = {3},
  number = {8},
  pages = {1--30},
  year = {2024},
  note = {We use the published numbering. \url{https://theoretics.episciences.org/13338/pdf}}
}

@article{DDL,
  author = {Dhar, Manik and Dvir, Zeev and Lund, Ben},
  title = {Simple Proofs for {Furstenberg} Sets Over Finite Fields},
  journal = {Discrete Analysis},
  pages = {1--16},
  year = {2021},
  note = {Article 22. \url{https://arxiv.org/abs/1909.03180v2}}
}

@misc{DLMNRR,
  author = {Doron, Dean and Leonov, Tal and Mosheiff, Jonathan and Navas, Henrique and Resch, Nicolas and Ribeiro, Jo{\~a}o},
  title = {Discrepancy for Random Linear Codes},
  howpublished = {arXiv:2606.24471v1},
  year = {2026},
  note = {\url{https://arxiv.org/abs/2606.24471v1}}
}

@article{Dvi,
  author = {Dvir, Zeev},
  title = {On the size of {Kakeya} sets in finite fields},
  journal = {Journal of the American Mathematical Society},
  volume = {22},
  number = {4},
  pages = {1093--1097},
  year = {2009},
  note = {\url{https://arxiv.org/abs/0803.2336}}
}

@article{EE,
  author = {Ellenberg, Jordan S. and Erman, Daniel},
  title = {{Furstenberg} sets and {Furstenberg} schemes over finite fields},
  journal = {Algebra \& Number Theory},
  volume = {10},
  number = {7},
  pages = {1415--1436},
  year = {2016},
  note = {\url{https://arxiv.org/abs/1502.03736}}
}

@article{EOT,
  author = {Ellenberg, Jordan S. and Oberlin, Richard and Tao, Terence},
  title = {The {Kakeya} set and maximal conjectures for algebraic varieties over finite fields},
  journal = {Mathematika},
  volume = {56},
  number = {1},
  pages = {1--25},
  year = {2010},
  note = {\url{https://arxiv.org/abs/0903.1879v2}}
}

@inproceedings{JKZ,
  author = {Jaber, Michael and Kumar, Vinayak M. and Zuckerman, David},
  title = {Linear Hashing Is Optimal},
  booktitle = {Proceedings of {STOC} 2025},
  pages = {245--255},
  year = {2025},
  note = {\url{https://arxiv.org/abs/2505.14061v1}}
}

@article{KLSS,
  author = {Kopparty, Swastik and Lev, Vsevolod F. and Saraf, Shubhangi and Sudan, Madhu},
  title = {{Kakeya}-type sets in finite vector spaces},
  journal = {Journal of Algebraic Combinatorics},
  volume = {34},
  pages = {337--355},
  year = {2011},
  note = {\url{https://www.math.utoronto.ca/swastik/kakeya2.pdf}}
}

@techreport{KM,
  author = {Kumar, Vinayak M. and Mon, Geoffrey},
  title = {List Decoding, Linear Hashing, and {Furstenberg} over {$\mathbb F_q$}},
  institution = {Electronic Colloquium on Computational Complexity},
  type = {Report},
  number = {TR26-181},
  year = {2026},
  note = {15 September 2026. arXiv:2609.17020v1. \url{https://eccc.weizmann.ac.il/report/2026/181/}}
}

@article{ORW,
  author = {Ordentlich, Or and Regev, Oded and Weiss, Barak},
  title = {Bounds on the density of smooth lattice coverings},
  journal = {Journal d'Analyse Math{\'e}matique},
  volume = {156},
  pages = {301--326},
  year = {2025},
  note = {References to numbered statements use arXiv:2311.04644v1 (November 2023). \url{https://arxiv.org/abs/2311.04644v1}}
}

@article{ORW22,
  author = {Ordentlich, Or and Regev, Oded and Weiss, Barak},
  title = {New bounds on the density of lattice coverings},
  journal = {Journal of the American Mathematical Society},
  volume = {35},
  pages = {295--308},
  year = {2022},
  note = {\url{https://arxiv.org/abs/2006.00340}}
}

@misc{PB,
  author = {Pathegama, Madhura and Barg, Alexander},
  title = {{R{\'e}nyi} divergence guarantees for hashing with linear codes},
  howpublished = {arXiv:2405.04406v2},
  year = {2025},
  note = {Version 2, 5 June 2025; first version May 2024. \url{https://arxiv.org/abs/2405.04406v2}}
}
\end{document}